\newtheorem{thm}{Theorem}[section]
\newtheorem*{thmx}{Theorem} 
\newtheorem{prop}[thm]{Proposition}
\newtheorem{lem}[thm]{Lemma}
\newtheorem{cor}[thm]{Corollary}
\theoremstyle{definition}
\newtheorem{defn}[thm]{Definition}
\newtheorem{ex}[thm]{Example}
\theoremstyle{remark}
\newtheorem{rem}[thm]{Remark}
\newcommand{\Rr}{\mathbb R}
\newcommand{\Zz}{\mathbb Z}
\newcommand{\Nn}{\mathbb N}
\newcommand{\Cc}{\mathbb C}
\newcommand{\Kk}{\mathbb K}
\newcommand{\g}{\mathfrak{g}}
\newcommand{\h}{\mathfrak{h}}
\newcommand{\set}[1]{\left\{#1\right\}}
\newcommand{\eval}[1]{\left\langle#1\right\rangle}
\newcommand{\brr}[1]{\left[#1\right]}
\newcommand{\ds}{\displaystyle}
\renewcommand{\d}{\mathrm{d}}
\newcommand{\smalcirc}{\mbox{\,\tiny{$\circ $}\,}} 
\DeclareMathOperator{\Ker}{Ker} 
\DeclareMathOperator{\im}{Im} 
\DeclareMathOperator{\ad}{ad} 
\DeclareMathOperator{\End}{End} 
\DeclareMathOperator{\Hom}{Hom} 
\DeclareMathOperator{\Der}{Der} 
\DeclareMathOperator{\Coder}{Coder} 
\newcommand {\comm}[1]{{\marginpar{*}\scriptsize{\ #1 \ }}}
\begin{document}

\title{Embedding tensors on Lie $\infty$-algebras with respect to Lie $\infty$-actions}

\author{Raquel Caseiro}
\address{CMUC\\  Department of Mathematics\\University of Coimbra\\ 
3000-143 Coimbra\\ Portugal
}
\email{raquel@mat.uc.pt}

\author{Joana Nunes da Costa}
\address{CMUC\\  Department of Mathematics\\University of Coimbra\\ 
3000-143 Coimbra\\ Portugal
}
\email{jmcosta@mat.uc.pt}

\thanks{The authors are partially supported by the Center for Mathematics of the University of Coimbra - UIDB/00324/2020, funded by the Portuguese Government through FCT/MCTES}

\begin{abstract} Given two Lie $\infty$-algebras $E$ and $V$, any 
 Lie $\infty$-action of $E$ on $V$ defines a  Lie $\infty$-algebra structure on $E\oplus{}V$.   
 Some compatibility between the action and the Lie $\infty$-structure on $V$ is needed to obtain a particular  Loday $\infty$-algebra, the non-abelian hemisemidirect product.
These are the coherent actions. For coherent actions it is possible to define non-abelian homotopy embedding tensors as Maurer-Cartan elements of a convenient Lie $\infty$-algebra. Generalizing the classical case, we see that a non-abelian homotopy embedding tensor defines a Loday $\infty$-structure on $V$ and is  a morphism between  this new Loday $\infty$-algebra and $E$.\end{abstract}

\keywords{Lie $\infty$-algebra, embedding tensor, hemisemidirect product}


\maketitle

\section*{Introduction} %
\label{sec:introduction} 

Embedding tensors  have been widely used and studied in the context of supergravity theory and (higher) gauge theories. 
The link between embedding tensors, Leibniz algebras and Lie $\infty$-algebras is exploited  in \cite{KS2020} and provides an interesting mathematical perspective that has brought this subject back to the forefront of mathematical/physical interests. Several works have appeared that provide  new and different enlightenment on the mathematical framework behind embedding tensors and their tensor hierarchies (see for instance \cite{BH2020, LSS2014, GHP2014, P2014, L2019, LP2020, LS2023} and all the references therein). From a purely algebraic  point of view,  embedding tensors are also known as  averaging operators on Lie/Leibniz algebras (see for instance \cite{A2000, R95}).  Recently,  the homotopy version of embedding tensors, their cohomology  and deformations was  studied in \cite{STZ21}. More recently, in \cite{TS2023} we can find the case where the representation space is also Lie algebra and, in this case, the embedding tensors are   called   non-abelian.

Leibniz algebras are behind  the classical (mathematical) notion of  embedding tensors. A (left) Leibniz algebra is a vector space $V$ together with a bilinear product $\smalcirc$ satisfying the Jacobi identity:
$$(x\smalcirc y) \smalcirc z-x\smalcirc (y \smalcirc z)+y\smalcirc (x \smalcirc z)=0, \quad x,y,z\in V.$$
Each Leibniz algebra  has  an associated Lie algebra called its gauge algebra $V^{Lie}=V/ \eval{x^2,\; x\in V}$ such that the projection $p:V\to V^{Lie}$ preserves the products:
$$p(x\smalcirc y)=\brr{p(x),p(y)}.$$
Left multiplication  turns $V$ into a left $V^{Lie}$-module: 
$$\Phi_{p(x)}y:=p(x)\cdot y=x\smalcirc y, \quad x,y\in V,$$
and $p:V\to V^{Lie}$,  called the \emph{embedding tensor} of $V$, is  an equivariant linear map
$$p\smalcirc \Phi_{p(x)}=\ad_{p(x)}\smalcirc p, \quad x\in V.$$

This setting can be generalized to Lie-Leibniz triples \cite{L2019, LP2020, LS2023}. A Lie-Leibniz triple is a triple $(\g,V,T)$ where  $\g$ is a Lie algebra, $V$ is a $\g$-module (given by a representation $\rho:\g\to \End(V)$) and $T:V\to \g$ is a linear map such that 
$$\brr{T(x),T(y)}_{\g}=T(\rho_{T(x)} y), \quad x,y\in V.$$ It turns out that $V$ acquires a Leibniz product given by $x\smalcirc y:=\rho_{T(x)} y$.
Therefore, in a Lie-Leibniz triple  $(\g,V,T)$,  the embedding tensor $T$ is equivariant (in the image of $T$):
$$T\smalcirc \rho_{T(x)}=\ad_{T(x)}\smalcirc T, \quad x\in V.$$
Also, it defines a Leibniz product on  $V\oplus \g$   called the hemisemidirect product. 
Lie $\infty$-algebras appeared when authors tried to explain the tensor hierarchies associated with embedding tensors.  Kotov and Strobl showed in \cite{KS2020}  that any Leibniz algebra $V$ gives rise to a universal Lie $\infty$-algebra  structure in the  minimal Lie subalgebra of $T(V)_{Lie}=(T(V), \brr{\cdot , \cdot }_c)$. In another direction, Lavau and his co-authors \cite{L2019}, \cite{LP2020},  \cite{LS2023}  gave different approaches to the  tensor hierarchies associated with embedding tensors. The construction given in \cite{KS2020} and the constructions given by Lavau and co-authors    are different and  natural relations between them  are still unclear.

More recently,  using the $V$-data  approach by Voronov \cite{V05}, Sheng et al. \cite{STZ21} 
introduced the homotopy version of embedding tensors with respect to a Lie $\infty$-representation.
Any Lie $\infty$-representation $\Phi:E\to \End(V)$ defines a Lie $\infty$-algebra $E\ltimes V$ on $E\oplus V$ but also a Leibniz $\infty$-algebra $E\ltimes^\Phi V$ in the same space: the   hemisemidirect product of $E$ by $V$. 
In  \cite{STZ21} homotopy embedding tensors  are seen as Maurer-Cartan elements of a particular  Lie $\infty$-algebra and  the  authors define  a Leibniz $\infty$-algebra  structure on  $V$.
 Moreover, they  also  establish  the existence  of a functor from Leibniz $\infty$-algebras category to the Lie $\infty$-algebras category that, in particular, induces a functor from the category of homotopy embedding tensors to that of Lie $\infty$-algebras, thus  generalizing of  the result of  Kotov and Strobl \cite{KS2020}.

Recently,  Tang and Sheng \cite{TS2023} considered  the non-abelian case.
 Given an action $\rho:\g\to \Der(\h)$ of a  Lie algebra $\g$ on another Lie algebra $\h$ the product
 $(x+v)\smalcirc (y+w) =\rho_x w +\brr{v,w}_{\h}$ 
 may not be a Leibniz product.
 For this to happen there has to be a compatibility between the action $\rho$ and the Lie algebra $\h$. These are the coherent actions. 
A non-abelian  embedding tensor (with respect to a coherent action $\rho$) is a linear map $T:\h\to \g$ such that
$$\brr{Tu,Tv}_{\g}=T\left(\rho_{Tu}v+\brr{u,v}_{\h}\right), \quad u,v\in\h.$$
When $\h$ is abelian,  usual embedding tensors are recovered.
Any non-abelian embedding tensor defines a (new) Leibniz algebra structure on $\h$ and is a Maurer-Cartan element of a differential graded Lie algebra.

In this paper we intend to  give a homotopy version of non-abelian embedding tensors, thus generalizing both definitions in \cite{STZ21} and in \cite{TS2023}.

Given two Lie $\infty$-algebras $(E,\set{l_k}_{k\in\Nn})$  and $(V,\set{m_k}_{k\in\Nn})$, a Lie $\infty$-action is a Lie $\infty$-morphism 
$$\Phi: E\to \Coder(\bar S(V))[1]$$ and  it defines a Lie $\infty$-algebra structure on $E\oplus V$, the direct product $E\ltimes V$ \cite{MZ, CC2022}.
We define coherent Lie $\infty$-actions and prove that being coherent is a necessary and sufficient condition  for $E\oplus V$ to have  a particular Loday $\infty$-structure: the non-abelian hemisemidirect product.

\begin{thmx}
Let $\Phi: (E,\set{ l_k}_{k\in\Nn}) \to (\Coder (\bar S(V))[1], \partial_{M_V}, \brr{\cdot,\cdot})$ be a Lie $\infty$-action. Consider $E\oplus V$ equipped with brackets $\set{\mathfrak{l}_n}_{n\in \Nn}$ defined by:
\begin{eqnarray*}
\lefteqn{ \mathfrak{l}_n\left(x_1+v_1,\ldots,x_{n-1}+v_{n-1}, x_n+v_n\right)=  l_n(x_1,\dots, x_n)} \nonumber\\
&&\quad +\sum_{i=1}^{n-1}\Phi^{i,n-i}(x_1,\ldots, x_{i}; v_{i+1},\dots , v_n)  
 + m_n(v_1,\dots, v_n). 
\end{eqnarray*}
Then, $E\oplus V$ is a Loday $\infty$-algebra if and only if $\Phi$ is a coherent action.
\end{thmx}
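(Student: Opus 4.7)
My plan is to verify the Loday $\infty$-identity for $\{\mathfrak{l}_n\}$ directly, by expanding each nested composition $\mathfrak{l}_{n_2}(\mathfrak{l}_{n_1}(\cdots),\cdots)$ according to the three structural summands $l_\bullet$, $\Phi^{\bullet,\bullet}$, and $m_\bullet$, and then sorting the resulting terms by how many arguments lie in $E$ versus how many lie in $V$. Because the brackets $\mathfrak{l}_n$ are multilinear and each input $a_j$ decomposes canonically as $x_j+v_j$, the full Loday identity on $E\oplus V$ splits into a family of identities indexed by this $(E,V)$-bipartition of the inputs, and it suffices to check each piece separately.

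I would then handle the pieces in order of increasing complexity. When all inputs are taken in $E$, only $l_\bullet$-summands survive, and the Loday identity collapses to the Lie $\infty$-identity for $(E,\{l_k\})$, which holds by hypothesis, since on graded symmetric arguments the Loday and Lie $\infty$ shuffle sums coincide. When all inputs are taken in $V$ and no $\Phi$ appears in the expansion, one analogously recovers the Lie $\infty$-identity for $(V,\{m_k\})$. The mixed pieces in which $\Phi$ appears exactly once in each summand assemble into the Lie $\infty$-morphism equations for $\Phi:E\to\Coder(\bar S(V))[1]$, and these hold because $\Phi$ is a Lie $\infty$-action by hypothesis; these are precisely the same compatibilities that make $E\ltimes V$ into a Lie $\infty$-algebra in the symmetric setting of \cite{MZ, CC2022}.

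What remains are the residual terms where the inputs include several $V$-components and in which an outer $\Phi^{i,n-i}$ meets an inner $m_\bullet$, or two $\Phi$-summands appear in sequence, producing compositions of the form $\Phi\circ m$ and $m\circ\Phi$ that cannot be absorbed into either the Lie $\infty$-equations on $V$ or the morphism equations for $\Phi$. Collecting these residuals yields a hierarchy of relations between $\Phi$ and $\{m_k\}$; these are exactly the coherence conditions, so their simultaneous vanishing is equivalent to $\Phi$ being a coherent action, which closes the ``if and only if''. The main obstacle will be the combinatorial bookkeeping: carefully enumerating the Koszul signs in the Loday shuffle sum, confirming that cross-terms arising from different $(E,V)$-bipartitions of the inputs cancel correctly, and matching the surviving obstruction — bidegree by bidegree — to the coherence axioms. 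A convenient device is to rewrite everything at the level of the codifferential $D$ on the non-cocommutative cofree coalgebra on $(E\oplus V)[1]$ induced by $\{\mathfrak{l}_n\}$, split $D=D_E+D_\Phi+D_V$ along the three structural summands, and read off the coherence condition as the piece of $D^2=0$ that is not already forced by the $L_\infty$-structures on $E$, on $V$, and by the action axioms for $\Phi$.
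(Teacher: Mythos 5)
Your plan is essentially the paper's own proof: form the coderivation $Q$ of the Zinbiel coalgebra $T^{Z}(E\oplus V)$ from the brackets $\mathfrak{l}_n$, split the Jacobiator $Q^{2}$ according to the word pattern of $E$- and $V$-arguments, observe that the all-$E$, all-$V$, and ``$E$-block before $V$-block'' patterns vanish using only the $L_\infty$-structures and the action axioms, and identify the surviving terms --- exactly those patterns in which a $V$-block precedes an $E$-block, producing the $m\smalcirc\Phi$, $\Phi$-inserted-$m$ and $\Phi\smalcirc\Phi$ compositions you describe --- with the commutators $\brr{\ad_{v},\Phi_{x}}$ and $\brr{\Phi_{y;v},\Phi_{x}}$ whose vanishing is the definition of coherence. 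The one caveat is that the deferred ``combinatorial bookkeeping'' is where the theorem actually lives: the paper needs an explicit computation of $Q$ on each word pattern with all Koszul signs, followed by an induction over patterns (first extracting $\brr{\ad_{v},\Phi_{x}}=0$ from $v\otimes y\otimes w$, then using it to extract $\brr{\Phi_{x;v},\Phi_{y}}=0$ from $x\otimes v\otimes y\otimes w$, and so on) to show the obstructions can be isolated one family at a time, so the assertion that the residuals ``are exactly the coherence conditions'' is the content of the proof rather than a routine check.
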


For
coherent actions we define non-abelian homotopy embedding tensors. These are (Zinbiel) comorphisms $T:T(V)\to T(E)$ that are related  to Maurer-Cartan elements of a  particular Lie $\infty$-algebra. We also  give the explicit equations that define non-abelian homotopy embedding tensors.  Then we prove that each non-abelian embedding tensor induces a (descendent) Leibniz $\infty$-algebra structure on $V$ and, finally,  we analyse some examples as well as the Lie $\infty$-algebra that controls the deformations of these tensors.

The paper is divided into three sections.
In the first section we  review Lie $\infty$-algebras and Loday $\infty$-algebras. First,  we introduce notations and conventions on graded vector spaces. Then, we define Lie and Lie$[1]$ $\infty$-algebras and its properties as well as  Loday and Loday$[1]$ $\infty$-algebras. 

The reader should be warned that after Section \ref{sec:1} we will only work with Lie$[1]$ $\infty$-algebras
 and Loday$[1]$ $\infty$-algebra and, for the sake of simplification, we discard the symbol $[1]$.

In Section \ref{sec:2}, we define coherent Lie $\infty$-actions and  the non-abelian hemisemidirect product $E\ltimes^{{\Phi}}V$. We  show that Lie $\infty$-representations on chain complexes are always coherent Lie $\infty$-actions and we analyse some examples. 

In Section \ref{sec:3}, we introduce non-abelian homotopy embedding tensors. They  are related to Maurer-Cartan elements of a  particular Lie $\infty$-algebra and we establish   explicit conditions that reveals what this Maurer-Cartan condition really means. We see how a non-abelian embedding tensor induces a  new (descendent) Loday $\infty$-algebra and we prove that non-abelian embedding tensors behave well under the functor $\mbox{(Loday $\infty$)} \to \mbox{(Lie $\infty$)}$ given in \cite{KS2020, STZ21}. 
Since the adjoint representation of a Lie $\infty$-algebra  defines a coherent Lie $\infty$-action, this special case is treated more closely. We finish by looking at  deformations of embedding tensors  and the Lie $\infty$-algebra that controls them.

\section{Lie and Loday \texorpdfstring{$\infty$}{TEXT}-algebras} \label{sec:1}%

\subsection{Notations and conventions on graded vector spaces}
We will work with finite dimensional $\Zz$-graded vector spaces and over a field $\Kk=\Rr$ or $\Kk=\Cc$.
Let $V=\oplus_{i\in\Zz} V_{i}$ be a graded vector space. Each $V_{i}$ is the homogeneous component of $V$ of degree $i$. An element $x$ of $V_{i}$ is called homogeneous with degree $|x|$.

The {\emph{suspension}} of $V$ is the graded vector space $sV$ defined by $(sV)_{i}=V_{i-1}$, $i\in \Zz$. The \emph{desuspension} of $V$ is the graded vector space $s^{-1}V$ defined by $(s^{-1}V)_{i}=V_{i+1}$, $i\in \Zz$

The suspension operator $s:V\to sV$ is the linear map that increases the degree of the elements by $1$:
\begin{equation*}
s:x\in V_{i} \mapsto sx:=x\in (sV)_{i+1}=V_{i}.
\end{equation*} 
The desuspension operator $s^{-1}:V\to s^{-1}V$ lowers the degree of the elements by $1$.

For each $k\in\Zz$ we will denote by $V[k]$ the graded vector space $(s^{-1})^{k}V$, that is, $(V[k])_{i}=V_{i+k}$, $i\in \Zz$. 

A \emph{morphism} $\Phi:V\to W$ between two graded vector spaces is a degree-preserving linear map, i.e., a collection of linear maps $\Phi_{i}:V_{i}\to W_{i}$, $i\in\Zz$. We call $\Phi:V\to W$ a \emph{morphism of degree $k$}, for some $k\in\Zz$, if it is a morphism between $V$ and $W[k]$. The space of morphisms of degree $k$ is denoted by $\Hom^{k}(V,W)$ and
$\Hom(V,W)=\bigoplus_{k\in\Zz}\Hom^{k}(V,W)$.

Given two graded vector spaces $V$ and $W$, their \emph{direct sum} $V\oplus W$ (resp. \emph{tensor product} $V\otimes W$) is the graded vector space with grading
\begin{equation*}
(V\oplus W)_{i}=V_{i}\oplus W_{i} \quad \mbox{(resp. $(V\otimes W)_{i}=\oplus_{k+j=i} V_{j}\otimes W_{k}$).}
\end{equation*}

For each $k\in \Nn_{0}$, let $T^{k} (V)={\otimes^{k}}V$. The tensor algebra over $V$ is 
$T(V)=\oplus_{k\geq 0}T^{k} (V)$ and the reduced tensor algebra over $V$ is $\bar T(V)=\oplus_{k\geq 1} T^{k} (V)$.

We follow the standard Koszul sign convention: for homogeneous morphisms $f:V\to W$ and $g:E\to L$, of degrees $|f|$ and $|g|$, respectively, the tensor product 
$f\otimes g:V\otimes E\to W\otimes L$ is the morphism of degree $|f|+|g|$ given by 
\begin{equation*}
(f\otimes g)(x\otimes y)=(-1)^{|g|\,|x|}f(x)\otimes g(y),\quad x\in V,\, y\in E.
\end{equation*}

For each $k\geq 1$, 
the permutation group of order $k$, $S_{k}$, acts on $T^{k} (V)$ by
$$
 \sigma(v_{1}\otimes\ldots\otimes v_{k})=\epsilon(\sigma; v_{1}\otimes\ldots\otimes v_{k}) v_{\sigma(1)}\otimes \ldots \otimes v_{\sigma(n)}, $$
for $\sigma\in S_{k}$ and $v_{1},\ldots, v_{k}\in V$, where $\epsilon(\sigma; v_{1}\otimes\ldots\otimes v_{k})$ stands for \emph{Koszul sign}. Although the Koszul sign depends both on $\sigma$ and $v_{1}\otimes \ldots \otimes v_{k}$, for sake of simplification, we will simply denote it by
$\epsilon(\sigma)$. 
A permutation $\sigma$ of order $n=i_{1}+\ldots+i_{k}$ is called a \emph{$(i_{1},\ldots, i_{k})$-unshuffle} if 
\begin{eqnarray*}
\sigma(1)<&\ldots& < \sigma(i_{1}),\\
\sigma(i_{1}+1)<&\ldots &<\sigma(i_{1}+i_{2}), \\
&\ldots&\\
\sigma{(i_{1}+\ldots +i_{k-1}+1)}< &\ldots& < \sigma(n).
\end{eqnarray*}
The set of $(i_{1},\ldots, i_{k})$-unshuffles is denoted by $Sh(i_{1},\ldots,i_{k})$.

An important subset of unshuffles will be considered. An \emph{increasing $(i_{1},\ldots, i_{k})$-unshuffle} $\sigma$ is an element of $Sh(i_{1},\ldots,i_{k})$ such that
$$
\sigma(i_{1})<\sigma(i_{1}+i_{2})<\ldots < \sigma(n).
$$
The set of increasing $(i_{1},\ldots, i_{k})$-unshuffles is denoted by $\widetilde{Sh}(i_{1},\ldots,i_{k})$.


An element of $T^{k} (V)$ is called a \emph{symmetric tensor} (of order $k$) if it is invariant by the action of $S_{k}$. The set $T(V)^S$ of all symmetric tensors  is a graded subalgebra of $T(V)$, $i:T(V)^{S}\hookrightarrow T(V)$.
The symmetrization map $\pi:T(V)\to T(V)^{S}$,
$$
\pi(v_{1}\otimes\ldots,\otimes v_{n})=\sum_{\sigma\in S_{k}}\frac{\epsilon(\sigma)}{k!}v_{\sigma(1)} \otimes\ldots \otimes v_{\sigma(k)}, \quad \quad v_{1},\ldots, v_{k}\in V,
$$
allows the identification of  
the set of all symmetric tensors with the quotient 
$$S(V)=T(V)/\eval{x\otimes y-(-1)^{|x||y|}y\otimes x}.
$$ 
Therefore, the quotient $S(V)$ is a graded commutative algebra whose product we denote by $\cdot$:
$$
v_{1}\cdot\ldots\cdot v_{n}\simeq\pi(v_{1}\otimes \ldots \otimes v_{n}), \quad v_{1},\ldots,v_{n}\in V.
$$


A linear map $F:T(V)\to W$ is {\emph{symmetric}} if $F\smalcirc \pi=F$. Symmetric linear maps are in one-to-one correspondence with linear maps from $S(V)$ to $W$ and we use this correspondence to write
$$F(v_{1},\ldots, v_{k})=F(v_{1}\cdot\ldots\cdot v_{k}), \quad v_{1},\ldots, v_{k}\in V.$$ 

Given a linear map $F: T(V) \to W$, its {\emph{symmetrization}} is the morphism $F^{S}:S(V)\to W$, defined by 
$$F^{S}(v_{1},\ldots, v_{k})=\sum_{\sigma\in S_{k}}\frac{\epsilon(\sigma)}{k!}F(v_{\sigma(1)} \otimes\ldots \otimes v_{\sigma(k)}), \quad v_{1},\ldots, v_{k}\in V.
$$


%


The  reduced tensor algebra $\bar T(V)$ can be equipped with the coshuffle coproduct 
%
%
$\Delta^{c}:\bar T(V)\to \bar T(V)\otimes  \bar T(V)$  defined by
\begin{equation*}
\Delta^{c}(v)=0, \quad v\in V,
\end{equation*}
\begin{equation*}
\Delta^{c}(v_{1}\otimes \ldots \otimes v_{k})=\!\!\!\!\!\!\!\!\!\! \sum_{\substack{p=1\\ \sigma\in Sh(p,k-p)}}^{k-1} \!\!\!\!\!\!\!\!\!\! \epsilon(\sigma)\left(v_{\sigma(1)} \otimes\ldots \otimes v_{\sigma(p)}\right) \otimes \left(v_{\sigma(p+1)} \otimes \ldots \otimes v_{\sigma(k)}\right),
\end{equation*}
for all $v_{1},\ldots,v_{k}\in V$.
The coalgebra $(\bar T(V),\Delta^{c})$ is non-unital, cocommutative and coassociative. It is called the (reduced)\emph{coshuffle coalgebra} and is denoted by $\bar T^{c}(V)$. 

The coshuffle coproduct $\Delta^{c}$ induces a cocommutative coassociative coproduct in $\bar S(V)$, which will also be denoted by $\Delta^{c}$: 

\begin{equation*}
\Delta^{c}(v)=0, \quad v\in V,
\end{equation*}
\begin{equation*}
\Delta^{c}(v_{1}\cdot \ldots \cdot v_{k})=\!\!\!\!\!\!\!\!\!\! \sum_{\substack{p=1\\ \sigma\in Sh(p,k-p)}}^{k-1} \!\!\!\!\!\!\!\!\!\! \epsilon(\sigma)\left(v_{\sigma(1)} \cdot\ldots \cdot v_{\sigma(p)}\right) \otimes \left(v_{\sigma(p+1)} \cdot\ldots \cdot v_{\sigma(k)}\right),
\end{equation*}
for all $v_1, \ldots,v_k\in V$.
Its cogenerator is the projection map $p:\bar S(V)\to V$.

Let 
$\mathrm{Prim}(V)=\set{x\in \bar T(V):\; \Delta^c(x)=0}
$ be the set of primitive elements of $T^c(V)$. It is a graded Lie subalgebra of $\mathrm{Lie}(V)=(T(V),\brr{\cdot , \cdot }_c)$, where $\brr{\cdot , \cdot }_c$ denotes the commutator of the graded algebra $(T(V),\otimes)$. 
In fact, $\mathrm{Prim}(V)$ is the free Lie algebra generated by $V$ \cite{R03}.
Moreover, $(\bar T(V),\Delta^c)$ and $(\bar S(\mathrm{Prim}(V)), \Delta^c)$
are isomorphic coalgebras \cite{STZ21, KS2020}.



\subsection{Lie\texorpdfstring{$[1]$ $\infty$}{TEXT}-algebra}

A linear map $Q:\bar S(V)\to \bar S(V)$ of degree $|Q|$ is called a \emph{coderivation} of  $(\bar S(V),\Delta^{c})$ (of degree $|Q|$), if 
$$
\Delta^{c} Q=(Q\otimes \mathrm{Id} + \mathrm{Id}\otimes Q)\Delta^{c}.
$$

Let $\Coder(\bar S(V))$ be the set of coderivations of the reduced symmetric coalgebra $(\bar S(V),\Delta^{c})$.
It is well known that $\Coder(\bar S(V))$ together with the graded commutator becomes a graded Lie algebra:
$$
\brr{Q,P}_{c}=Q\smalcirc P - (-1)^{|Q||P|}P\smalcirc Q,\quad Q,P \in\Coder(\bar S(V)). 
$$

\begin{defn}
A \textbf{Lie$[1]$ $\infty$-algebra} (or a symmetric {Lie $\infty$-algebra}) is a graded vector space $V$ together with a degree $+1$ coderivation $M_V$ of the reduced symmetric coalgebra $(\bar S(V),\Delta^{c})$ squaring zero.
\end{defn}

By a standard argument, any coderivation $Q\in \Coder(\bar S(V))$ defines (and is defined by) its restriction maps $Q_{k}:S^{k}(V)\to V$, $k\geq 1$.
This way we have an alternative definition of Lie$[1]$ $\infty$-algebra.

\begin{defn}
A \textbf{Lie$[1]$ $\infty$-algebra} is a graded vector space $V$ together with a family a degree $+1$ linear maps $\set{l_{k}:S^{k}(V)\to V}_{k\geq 1}$ such that
\begin{equation}\label{eq:def:Lie:infty:algebra}
\sum_{\substack{i= 1\\ \sigma\in Sh(i,n-i)}}^{n} \!\!\!\!\! \epsilon(\sigma)
l_{{n-i+1}}\left(l_{i}(v_{\sigma(1)} ,\ldots, v_{\sigma(i)}), v_{\sigma(i+1)},\ldots, v_{\sigma(n)} \right)=0,
\end{equation} 
for all $n\in \Nn$, and all homogeneous elements $v_{1},\dots,v_{n}\in V$.
\end{defn}

\begin{rem}
The isomorphisms $\otimes^{k} s^{-1}:T^{k} (V)\to T^{k} (s^{-1}V)$, $k\in\Nn$, yield an isomorphism between $\Hom(\bar T(s^{-1}V), s^{-1}V) $ and $ \Hom(\bar T(V), V)$, given by:
\begin{equation}\label{eq:isomorphism:infty:infty1}
Q_{k}\in\Hom^{|Q_k|}(T^{k} (s^{-1}V), s^{-1}V) \mapsto q_{k}=s\smalcirc Q_{k}\smalcirc \otimes^{k}s^{-1}\in \Hom^{|Q_k|+1-k} (T^{k} (V), V).
\end{equation}
The inverse morphism is given by
$\ds
Q_{k}=(-1)^{\frac{k(k-1)}{2}} s^{-1}\smalcirc q_{k}\smalcirc \otimes^{k} s.
$

This isomorphism also provides an isomorphism between the symmetric algebra $S(s^{-1}V)$ and the exterior algebra $\wedge V$.
 Using this isomorphism one has the definition of \emph{Lie $\infty$-algebra}: a vector space $V$ equipped with a family of skew-symmetric brackets $l'_{k}:\wedge^{k}V\to V$, $k\geq 1$, of degree $2-k$, satisfying a collection of equations equivalent to (\ref{eq:def:Lie:infty:algebra}).
 
\end{rem}


\begin{ex}
A graded Lie$[1]$-algebra is a Lie$[1]$ $\infty$-algebra $V=\oplus_{i\in\Zz}V_i$ such that $l_n = 0$, for $n \neq 2$. Then the degree $0$ bilinear map on $sV=V[-1]$ is defined by:
 \begin{equation}
 \label{eq:decalage}
[sx,sy] := l'_{2}(sx,sy)=(-1)^{i} s(l_2(x,y)), \quad   x \in V_i, y\in V_j, 
 \end{equation}
is a graded Lie bracket.
In particular, if $V=V_{-1}$ is concentrated in degree $-1$, we get a Lie algebra structure.
\end{ex}

\begin{ex}
A differential graded Lie$[1]$-algebra (DGLA$[1]$) is a Lie$[1]$ $\infty$-algebra $V=\oplus_{i\in\Zz}V_i$ such that $l_n=0$, for $n \neq 1$ and $n \neq 2$.
Then $\d:=l_ 1$ is a degree $+1$ linear map $\d:V\to V$ squaring zero and satisfies the following compatibility condition with the bracket $ \set{\cdot,\cdot}= l_2(\cdot,\cdot)$:
\begin{equation*}
\left\{\begin{array}{l}
\d\set{x,y} + \set{\d(x),y} + (-1)^{|x|}\set{x, \d(y)}=0,\\
\set{\set{x,y},z} + (-1)^{|y||z|}\set{\set{x,z},y} + (-1)^{|x|}\set{x,\set{y,z}}=0.
\end{array}\right. 
\end{equation*}
\end{ex}

\begin{ex}\label{ex:DGLA:End:E}
Let $(V=\oplus_{i\in\Zz}V_i, \d)$ be a cochain complex. Then $\End (V)[1]=(\oplus_{i\in\Zz} \End_i V)[1]$ has a natural DGLA$[1]$  structure with $l_1=\partial_{\d}, \;\;l_2=\brr{\cdot , \cdot }$ given by:
\begin{equation*}
\left\{\begin{array}{l}
 \partial_{\d}\phi
=-\d \smalcirc \phi + (-1)^{|\phi|+1} \phi\smalcirc \d,\\
 \brr{\phi,\psi}
=(-1)^{|\phi|+1}\left(\phi\smalcirc \psi - (-1)^{(|\phi|+1)(|\psi|+1)} \psi\smalcirc\phi \right),
\end{array}\right.
\end{equation*}
for $\phi, \psi$ homogeneous elements of $\End(V)[1]$.
\end{ex}

Let $(E,M_E\equiv\set{l_k}_{k\in\Nn})$ and $(V,M_V\equiv\set{m_k}_{k\in\Nn})$ be Lie $\infty$-algebras. A \emph{Lie ${\infty}$-morphism } $\ds \Phi:E \rightarrow V$
is given by a collection of degree zero linear maps:
$$
\Phi_k:S^k(E)\to V,\quad k\geq 1,
$$
such that, for each $n\geq 1$,
\begin{align}\label{eq:def:Lie:infty:morphism}
&\sum_{\begin{array}{c} \scriptstyle{k+l=n}\\ \scriptstyle{\sigma\in {Sh}(k,l)}\\ \scriptstyle{l\geq 0, \, k\geq 1}\end{array}}\!\!\!\!\!\!\! \varepsilon(\sigma) \Phi_{1+l}\Big(l_k(x_{\sigma(1)},\ldots, x_{\sigma(k)}), x_{\sigma(k+1)}, \ldots, x_{\sigma(n)}\Big) = \\ &=\!\!\!\!\!\!\!\!\!\!\! \sum_{\begin{array}{c}\scriptstyle{k_1+\ldots+ k_j=n} \\ \scriptstyle{\sigma\in \widetilde{Sh}(k_1,\ldots, k_j)}\end{array}} \!\!\!\!\!\!\!  {\varepsilon(\sigma)}\,m_j\Big(\Phi_{k_1}(x_{\sigma(1)}, \ldots x_{\sigma(k_1)}),
\Phi_{k_2}(x_{\sigma(k_1+1)}, \ldots x_{\sigma(k_1+k_2)}),\ldots,\nonumber \\
&\hspace{4cm}  \Phi_{k_j}(x_{\sigma(k_1+\ldots+k_{j-1}+1)}, \ldots, x_{\sigma(n)})\Big).\nonumber
\end{align}

\noindent{If $\Phi_k=0$ for $k\neq 1$, then $\Phi$ is called a \emph{{strict} Lie ${\infty}$-morphism.}}

Considering the coalgebra morphism $\Phi: \bar S(E)\to \bar S(V)$ defined by
the collection of degree zero linear maps $$
\Phi_k:S^k(E)\to V,\quad k\geq 1,
$$
  we see that Equation \eqref{eq:def:Lie:infty:morphism} is equivalent to
$\Phi$ preserving the Lie $\infty$-algebra structures:
$$\Phi \smalcirc M_E=M_V\smalcirc \Phi.$$

 Let us now give the definition of a (curved) representation of a Lie $\infty$-algebra.
 
 A complex $(V,\d)$ induces a  natural symmetric DGLA$[1]$ structure on $\End(V)[1]$  (see Example \ref{ex:DGLA:End:E}).
 A \emph{representation} of a Lie $\infty$-algebra $(E,M_E\equiv\set{l_k}_{k\in\Nn})$ on a complex $(V,\d)$ is a Lie $\infty$-morphism $$\Phi:(E,\set{ l_k}_{k\in\Nn})\rightarrow (\End(V)[1],\partial_{\d}, \brr{\cdot , \cdot}),$$
 {i.e., $\Phi \smalcirc M_E= M_{\End(V)[1]}\smalcirc \Phi$, where  $ M_{\End(V)[1]}$ is the coderivation determined by $\partial_{\d}+ \brr{\cdot , \cdot}$.}

 Equivalently, a  representation of $E$ on $(V,\mathrm{d})$ is defined by a collection of degree $+1$ maps
$$\Phi_k:S^k(E)\to \End(V),\quad k\geq 1,$$
such that, for each $n\geq 1$,
\begin{align*} 
\lefteqn{\sum_{\begin{array}{c} \scriptstyle{i=1}\\ \scriptstyle{\sigma\in Sh(i,n-i)}\end{array}}^{\scriptstyle{n}}\!\!\!\!\!\!\!\!\!\! \varepsilon(\sigma)\Phi_{n-i+1}\left(l_i\left(x_{\sigma(1)}, \ldots, {x_{\sigma(i)}}\right), {x_{\sigma(i+1)}}, \ldots, {x_{\sigma(n)}}\right)} \\
&= & \!\!\!\!  \partial  \Phi_n(x_1,\ldots, x_n)\!+\!  \!\!\!\!\!\!\!\!\!\!\sum_{\begin{array}{c} \scriptstyle{j=1}\\ \scriptstyle{\sigma\in \widetilde{Sh}(j,n-j)}\end{array}}^{\scriptstyle{n-1}} \!\!\!\!\!\!\!\!\!\!\varepsilon(\sigma)\!\!\brr{\Phi_j({x_{\sigma(1)}}, \ldots, {x_{\sigma(j)}}) , \Phi_{n-j}({x_{\sigma(j+1)}}, \ldots, {x_{\sigma(n)}}) }. \nonumber
\end{align*}

A \emph{Maurer-Cartan element} of a Lie$[1]$ $\infty$-algebra $(E,\set{l_k}_{k\in\Nn})$ is a degree zero element $e$ of $E$ such that
\begin{equation} \label{def:MC:element}
\sum_{k \geq 1}\frac{1}{k!}\, l_{k}(e, \ldots, e) =0.
\end{equation}
The set of Maurer-Cartan elements of $E$ is denoted by $\textrm{MC}(E)$.
Let $e$ be a Maurer-Cartan element of $(E,\set{l_k}_{k\in\Nn})$ and set, for $k\geq 1$,
\begin{equation} \label{def:twisting:MC}
l_k^{e}(x_1, \ldots, x_k):= \sum_{i\geq 0}\frac1{i!}\, l_{k+i}(e, \ldots, e, x_1, \ldots, x_k).
\end{equation}
Then, $(E,\set{l_k^{e}}_{k\in\Nn})$ is a Lie$[1]$ $\infty$-algebra, called \emph{twisting of} $E$ by $e$ \cite{G}.
 For filtered, or even weakly filtered Lie $\infty$-algebras, the convergence of the infinite sums defining Maurer-Cartan elements and twisted Lie $\infty$-algebras (Equations (\ref{def:MC:element}) and (\ref{def:twisting:MC})) is guaranteed (see \cite{G,FZ2015}).

%
%
%
%

\subsection{Loday \texorpdfstring{$\infty$}{TEXT}-algebra and Loday\texorpdfstring{$[1]$ $\infty$}{TEXT}-algebra}
Let us recall the definition and some properties  of Loday $\infty$-algebras and Loday[1] $\infty$-algebras. More details can be found in  \cite{AP2010, U2011}.

Let $V$ be a graded vector space. The coproduct 
$$
\Delta^{Z}:\bar T(V) \to \bar T(V)\otimes \bar T(V)
$$
given by
\begin{equation*}
\Delta^{Z}(v)=0, \quad v\in V,
\end{equation*}
\begin{equation*}
\Delta^{Z}(v_{1}\otimes \ldots \otimes v_{k})=\!\!\!\!\!\!\!\!\!\! \sum_{\substack{p=1\\ \sigma\in Sh(p,k-p-1)}}^{k-1} \!\!\!\!\!\!\!\!\!\! \epsilon(\sigma)\left(v_{\sigma(1)} \otimes\ldots\otimes v_{\sigma(p)}\right) \otimes \left(v_{\sigma(p+1)} \otimes \ldots\otimes v_{\sigma(k-1)}\otimes v_{k}\right),
\end{equation*}
for all $v_{1},\ldots,v_{k}\in V$, provides a (cofree, conilpotent) Zinbiel  coalgebra structure to $\bar T(V)$, i.e, this coproduct 
satisfies
$$
(\mathrm{Id}\otimes \Delta^{Z})\Delta^{Z} = (\Delta^{Z}\otimes \mathrm{Id})\Delta^Z + (\tau\smalcirc \Delta^{Z}\otimes \mathrm{Id})\Delta^{Z},
$$
where $\tau$ is the twisting map: $\tau(v\otimes w)=(-1)^{|v||w|}w\otimes v$. 

\begin{rem} Notice that $\Delta^{c}=\Delta^{Z} + \tau\smalcirc \Delta^{Z}$, this means that the coshuffle coproduct is the symmetrization of the Zinbiel coproduct.
\end{rem}

The pair $(\bar T(V),\Delta^{Z})$ is the \emph{Zinbiel reduced tensor coalgebra} and we will denote it by $T^{Z}(V)$. The cogenerator of $T^Z(V)$ is the projection map $p:\bar T(V)\to V$.


Let $\Coder(T^{Z}(V))$ be the set of coderivations of $T^{Z}(V)=(\bar T(V),\Delta^{Z})$.
There is a (standard) isomorphism between $\Coder(T^Z(V))$ and $\Hom(\bar T(V), V)$ that we describe now:
each linear map $Q_{k}:T^{k} (V)\to V$ ($k\geq 1$) of degree $|Q|$ defines the coderivation $Q$ given by
\begin{eqnarray*}
Q(v_{1}\otimes \ldots\otimes v_{n})&=&\sum_{k=1}^{n}\!\!\!\!\!\!\!\! \sum_{\substack{i= 0, \\ \sigma\in Sh(i,k-1)}}^{k-1} \!\!\!\!\! (-1)^{|Q|(|v_{\sigma(1)}| +\ldots + |v_{\sigma(i)}|)}\epsilon(\sigma)
v_{\sigma(1)} \otimes\ldots\otimes v_{\sigma(i)}\otimes \nonumber\\
&& \otimes Q_{k}(v_{\sigma(i+1)},\ldots, v_{\sigma(k-1)}, v_{k} )\otimes v_{k+1}\otimes \ldots \otimes v_{n},
\end{eqnarray*}
for $v_{1},\ldots,v_{n}\in V$, $n\in \Nn$. 

The inverse mapping associates to each coderivation $Q:\bar T^Z(V)\to \bar T^Z(V)$ its restriction:
 $$p\smalcirc Q:\bar T(V)\to V,$$ where $p:\bar T(V)\to V$ stands for the projection onto $V$.

The (graded-)commutator gives to $\Coder(T^Z(V))$ a structure of a graded Lie algebra:
\begin{equation*}
\brr{Q,P}_{c}=Q\smalcirc P-(-1)^{|Q||P|} P\smalcirc Q.
\end{equation*}

By the isomorphism defined above, the graded vector space $\Hom(\bar T(V),V)$ acquires a graded Lie algebra bracket $\brr{\cdot , \cdot }_{B}$, known as the \emph{Balavoine bracket} \cite{B97}. 

\begin{rem}
Any coderivation $Q\equiv\set{Q_{k}}_{k\geq 1}$ of $T^{Z}(V)$ is also a coderivation of the coshuffle coalgebra $T^{c}(V)$. 
Since $\bar T^c(V)$ and $\bar S(\mathrm{Prim (V)})$ are isomorphic coalgebras, $Q$ defines a  coderivation $Q^{S}$ of $\bar S(\mathrm{Prim (V)})$.

\end{rem}
\begin{rem} 
Each coderivation $Q\equiv\set{Q_{k}}_{k\geq 1}$ of $(\bar S(V),\Delta^{c})$ defines a coderivation of $T^{Z}(V)$, which we will denote by $Q^{Z}$ (or simply by $Q$ when there is no danger of confusion). It is defined by the same family of (symmetric) linear maps $\set{Q_{k}:\bar S(V)\to V}_{k\geq 1}$ and these coderivations are related by
$$
\pi\smalcirc Q^{Z}= Q\smalcirc \pi.
$$
\end{rem}

\begin{defn}
A \textbf{Loday ${\infty}$-algebra} $(V,Q)$ is graded vector space $V$ together with a coderivation $Q$ of the Zinbiel coalgebra $\bar T^{Z}(s^{-1}V)$, of degree $+1$, that squares zero. 
\end{defn}

A Loday $\infty$-algebra $(V,Q)$ may be equivalently defined by a family of degree $+1$ linear maps $Q_{k}:T^{k} (s^{-1}V)\to s^{-1}V$ satisfying: for each $n\in\Nn$,
\begin{eqnarray}
0&=&\sum_{k=1}^{n} \sum_{\substack{i= 0\\ \sigma\in Sh(i,k-1)}}^{n-k} \!\!\!\!\! (-1)^{|v_{\sigma(1)}| +\ldots + |v_{\sigma(i)}|}\epsilon(\sigma)
Q_{{n-k+1}}\left(v_{\sigma(1)} ,\ldots, v_{\sigma(i)}, \right. \label{eq:def:Loday:infty:algebra} \\ \nonumber
&&\quad \left.Q_{k}(v_{\sigma(i+1)},\ldots, v_{\sigma(k+i-1)}, v_{i+k} ), v_{i+k+1}, \ldots , v_{n}\right), \end{eqnarray} 
for all $v_{1},\ldots, v_{n}\in s^{-1}V$.

\begin{rem}
The isomorphism $\Hom(\bar T(s^{-1}V), s^{-1}V) \simeq \Hom(\bar T(V), V)$ given by (\ref{eq:isomorphism:infty:infty1})
yields an equivalent definition of a Loday $\infty$-algebra in terms of maps $q_{k}\in \Hom^{2-k}(T^{k} (V),V)$, $k\in \Nn$.
So, a Loday $\infty$-algebra may be defined by a family of brackets $q_{k}\in \Hom^{2-k}(T^{k} (V),V)$ satisfying a collection of equations equivalent 
to (\ref{eq:def:Loday:infty:algebra}).
\end{rem}

A codifferential (i.e. a coderivation of degree $+1$ squaring zero) of $T^{Z}(V)=(\bar T(V),\Delta^{Z})$ induces a Loday $\infty$-algebra structure on $sV$. This justifies the next definition:

\begin{defn}
A \textbf{Loday$[1]$ $\infty$-algebra} is a graded vector space $V$ together with a codifferential of the Zinbiel coalgebra $T^{Z}(V)=(\bar T(V),\Delta^{Z})$. 
\end{defn}

\begin{ex}
A {\emph{graded Loday algebra}} is a Loday $\infty$-algebra $V$ such that 
$q_{n}=0$, for $n\neq 2$.
Then the bracket $\brr{\cdot , \cdot }:=q_{2}:V\otimes V\to V$ satisfies the graded Jacobi identity
$$
\brr{x,\brr{y,z}} =\brr{\brr{x,y},z}+(-1)^{|x||y|}\brr{y,\brr{x,z}}, \quad x,y,z\in V.
$$ 

In this case,
 $s^{{-1}}V$ is a Loday$[1]$ $\infty$-algebra 
 with codifferential defined by the linear map $Q_{2}(v,w):=(-1)^{|v|-1}s^{-1}\brr{sv,sw}$, $v,w\in s^{-1}V$, that satisfies 
 $$
 Q_{2}(Q_{2}(v,u),w)+(-1)^{|v|}Q_{2}(v,Q_{2}(u,w))+(-1)^{(|v|+1)|u|}Q_{2}(u,Q_{2}(v,w))=0,
$$
for all $v,u,w\in s^{-1}V$.
\end{ex}

\begin{ex}
A {\emph{graded differential Loday algebra}} is Loday $\infty$-algebra $V$ such that
$q_{n}=0$ for $n\neq 1$ and $n\neq 2$. Then
$\d:=q_{1} $ is a degree $+1$ linear map squaring zero and $\brr{\, ,\,}:=q_{2}$ a Loday bracket satisfying the compatibility condition:
\begin{equation*}
\d \brr{x,y}=\brr{\d x,y} + (-1)^{|x|}\brr{x,\d y}, \quad x,y\in V.
\end{equation*}

The graded vector space $s^{-1}V$ is a Loday$[1]$ $\infty$-algebra with the codifferential 
$Q$ of $\bar T(s^{-1}V)$ given by the maps
 $$Q_{1}(v)=s^{-1}\d sv, \quad v\in s^{-1}V,$$ and 
 $$Q_{2}(v,w)=(-1)^{|v|-1}s^{-1}\brr{sv , sw }, \quad v,w\in s^{-1}V.$$

They satisfy the compatibility conditions
\begin{eqnarray*}
 &&Q_{1}^2=0\\
& & Q_{1}Q_{2}(v,u) + Q_{2}(Q_{1} v,u)+(-1)^{|v|}Q_{2}(v,Q_{1} u)=0\\
 & & Q_{2}(Q_{2}(v,u),w)+(-1)^{|v|}Q_{2}(v,Q_{2}(u,w))+(-1)^{(|v|+1)|u|}Q_{2}(u,Q_{2}(v,w))=0,
\end{eqnarray*}
for all $v,u,w\in s^{-1}V$.
\end{ex}

\begin{ex}
Let $(V,Q_{V})$ and $(W,Q_{W})$ be two Loday $\infty$-algebras. The direct sum $V\oplus W=\bigoplus_{i\in\Zz}(V_{i}\oplus W_{i})$ is a Loday $\infty$-algebra with $Q=Q_{V}+Q_{W}$
given by, for each $k\in\Nn$,
$$
Q(v_{1}+w_{1}, \ldots, v_{k}+w_{k})=Q_{V}(v_{1}, \ldots, v_{k}) + Q_{W}(w_{1}, \ldots, w_{k}),
$$
for all $\ds v_{1}+w_{1}, \ldots, v_{k}+w_{k}\in s^{-1}(V\oplus W)=s^{-1}V \oplus s^{-1}W.$
\end{ex}

\begin{ex}\label{ex:Lie:is:Loday}
Let $(E, M_{E}\equiv \set{l_{k}}_{k\in\Nn})$ be a Lie$[1]$ $\infty$-algebra. 
The symmetric brackets $ \set{l_{k}}_{k\in\Nn}$ define $M_{E}^{Z}$, a coderivation of $T^{Z}(E)$.
 Notice that for symmetric brackets 
Equations (\ref{eq:def:Lie:infty:algebra}) and (\ref{eq:def:Loday:infty:algebra}) are equivalent and we conclude that $M_{E}^{Z} $
 is also a codifferential and defines a Loday[1] $\infty$-algebra structure on $E$. 
Obviously this also means that any Lie $\infty$-algebra structure on a graded vector space defines a Loday $\infty$-structure on that space. \end{ex}

\begin{defn} Let $(E, Q_{E})$ and $(V,Q_{V})$ be Loday $\infty$-algebras. A \textbf{Loday $\infty$-morphism} $F: (E,Q_{E})\to (V,Q_{V})$ is a comorphism 
$F:T^{Z}(s^{-1}E)\to T^{Z}(s^{-1}V)$ such that
\begin{equation*}
F\smalcirc Q_{E}=Q_{V}\smalcirc F.
\end{equation*}
\end{defn}

\begin{rem}\label{rem:def:Loday:morphism}
A Loday $\infty$-morphism $F:(E,Q_{E}\equiv \set{l_{k}}_{k\geq 1})\to (V,Q_{V}\equiv \set{m_{k}}_{k\geq 1})$ defines and is defined by a family of degree $0$ linear maps $F_{k}: T^{k} (s^{-1}E)\to s^{-1}V$, $k\geq 1$ satisfying:
\begin{eqnarray}
&&\sum_{k=1}^{n} \sum_{\substack{i= 0\\ \sigma\in Sh(i,k-1)}}^{n-k} \!\!\!\!\! (-1)^{|v_{\sigma(1)}| +\ldots + |v_{\sigma(i)}|}\epsilon(\sigma)
F_{{n-k+1}}\left(v_{\sigma(1)} ,\ldots, v_{\sigma(i)}, \right.\nonumber\\
&&\quad\quad\quad\quad\quad\quad \left.l_{k}(v_{\sigma(i+1)},\ldots, v_{\sigma(k+i-1)}, v_{i+k} ), v_{i+k+1}, \ldots , v_{n}\right) \nonumber\\
&&=\sum_{k=1}^{n} \sum_{\substack{\sigma\in \widetilde{Sh}(i_{1},\ldots, i_{k})\\i_{1}+\ldots+i_{k}=n}} 
\epsilon(\sigma)m_{k}\left(F_{i_{1}}(v_{\sigma{(1)}}, \ldots , v_{\sigma(i_{1})}) , F_{i_{2}}(v_{\sigma{(i_{1}+1)}},\ldots , v_{\sigma(i_{1}+i_{2})}),\right. \label{eq:def:Loday:comorphism}\\
&&\quad\quad\quad\quad\quad\quad \left. \ldots , F_{i_{k}}(v_{\sigma{(i_{1}+\ldots i_{k-1}+1)}}, \ldots , v_{\sigma(n)}) \right),\nonumber
\end{eqnarray}
for all $v_1,\ldots,v_n\in s^{-1}E$.

By isomorphism (\ref{eq:isomorphism:infty:infty1}), we can equivalently define a Loday $\infty$-morphism by a family of linear maps $f_{k}: T^k(E)\to V$ (of degree $1-k$) that satisfy a collection of equations equivalent to (\ref{eq:def:Loday:comorphism}). 
\end{rem}

\begin{rem} \label{rem:symmetric:Loday:are:Lie}
Any (Zinbiel) comorphism $F:T^{Z}(E)\to T^{Z}(V)$ is also a comorphism between coshuffle coalgebras.
When $F:T^{Z}(E)\to T^{Z}(V)$ is a comorphism defined by a family of symmetric maps $\set{F_{k}:S^{k} (E)\to V}_{k\geq 1}$ we say it is a \emph{symmetric comorphism}. 
Hence there is a natural one-to-one correspondence between symmetric (Zinbiel) comorphisms $F:T^{Z}(E)\to T^{Z}(V)$ and comorphisms between symmetric coalgebras $F^{S}:\bar S(E)\to \bar S(V)$. This correspondence is given by:
$$
F^{S}\smalcirc \pi=\pi \smalcirc F.
$$

Looking at the formula in Remark \ref{rem:def:Loday:morphism} we conclude that
$F:(E,M_{E})\to (V,M_{V})$ is a Lie $\infty$-algebra morphism if and only if it defines a symmetric Loday $\infty$-algebra morphism between 
$(E, M_{E}^{Z})$ and $(V, M_{V}^{Z})$.

Hence  the category of Lie $\infty$-algebras is a subcategory of the category of  Loday $\infty$-algebras.
\end{rem}

\begin{rem}
Let $(V, Q_V)$ be a Loday$[1]$ $\infty$-algebra. Then $Q_V$ is also a codifferential of the coshuffle coalgebra $T^c(V)$. By the coalgebra isomorphism $T^c(V) \simeq\bar S(\mathrm{Prim V})$, $Q_V$   defines a codifferential of $\bar S(\mathrm{Prim(V)})$. This means that $\mathrm{Prim}(V)$ is a Lie$[1]$ $\infty$-algebra. 
Each Loday $\infty$-morphism $F:(E,Q_E)\to (V,Q_V)$   
is also  a coshuffle comorphism that preserves codifferentials, consequently, it induces  a Lie $\infty$-morphism between $\mathrm{Prim (E)}$ and $\mathrm{Prim(V)}$.
This means there is a functor between the category of Loday$[1]$-algebras and Lie$[1]$-algebras.  \cite{STZ21}
\end{rem}

\textbf{Notation}: In what follows, we will only use the Lie$[1]$ $\infty$-algebras and the Loday$[1]$ $\infty$-algebra approach. For this reason, in the next sections, Lie/Loday $\infty$-algebras are meant to be Lie/Loday$[1]$ $\infty$-algebras, this means we will drop the shifting symbol $[1]$.

\section{Coherent actions of Lie \texorpdfstring{$\infty$}{TEXT}-algebras}\label{sec:2}
Let $(E,M_E\equiv\set{ l_k}_{k\in\Nn})$ be a Lie $\infty$-algebra and $(V, \d)$ a cochain complex. Example \ref{ex:DGLA:End:E} describes a DGLA[1] structure in $\End(V)[1]$. Recall that
 $(V,\d)$ is a \emph{(curved) representation} of $E$ if there is a Lie $\infty$-algebra morphism 
$$
\rho:(E,M_E\equiv\set{ l_k}_{k\in\Nn})\to (\End(V)[1],\partial_{\d}, \brr{\cdot , \cdot }).
$$

Now suppose 
 $(V,M_{V})$ is a Lie $\infty$-algebra. Since $(\bar S(V), M_{V})$ is a cochain complex, we have a DGLA$[1]$ structure on $s^{-1}(\Coder\bar S(V))=(\Coder\bar S(V))[1]$ induced by $\End\bar S(V)[1]$ given by:
\begin{equation*}
\left\{\begin{array}{l}
 \partial_{M_{V}}Q
=-M_{V} \smalcirc Q + (-1)^{|Q|+1} Q\smalcirc M_{V},\\
 \brr{Q,P}
=(-1)^{|Q|+1}\left(Q\smalcirc P - (-1)^{(|Q|+1)(|P|+1)} P\smalcirc Q \right),
\end{array}\right.
\end{equation*}
with $Q$ and $P$ homogeneous elements of $(\Coder\bar S(V))[1]$ of degrees $|Q|$ and $|P|$, respectively.

\begin{defn}
Let
$(E,M_E\equiv\set{ l_k}_{k\in\Nn})$ and $(V,M_V\equiv\set{ m_k}_{k\in\Nn})$ be Lie ${\infty}$-algebras. A {\textbf{ Lie $\infty$-action} of $E$ on $V$} is a Lie ${\infty}$-morphism 
$$
\begin{array}{lccc}
\Phi:& (E,\set{ l_k}_{k\in\Nn}) &\to& (\Coder (\bar S(V))[1], \partial_{M_V}, \brr{\cdot,\cdot}).\\
\end{array}
$$
\end{defn}

Any Lie $\infty$-action is determined by its degree $+1$ restriction maps:
$$\begin{array}{rccl}
\Phi^k:     & S^k(E)&\to& \Coder(\bar S(V)) \\
            &  x   & \mapsto & \Phi^k(x):=p\smalcirc \Phi(x)=\Phi_x,  \quad k\in\Nn.
\end{array}
$$ 
Furthermore, any Lie $\infty$-action $\Phi$ is completely determined by the family degree $+1$ of linear maps:
$$
\Phi^{k,n}:S^{k}(E)\times S^{n}(V)\to V, 
$$
given by
$$
\Phi^{k,n}(x;v)=\Phi^{n}_{x}(v)=p\smalcirc \Phi_x(v), \quad x\in S^{k}(V), v\in S^{n}(V), \quad k,n\in \Nn.
$$

For each $x\in \bar S(E)$ and $v\in \bar S(V)$, let $\Phi_{x;v}$ be the coderivation of $\bar S(V)$ defined by the family of maps
$$
\begin{array}{lccc}
(\Phi_{x;v})_i:& S^i(V)&\to& V\\
&w& \mapsto &\Phi^{\bullet,\bullet}(x;v.w),
\end{array}, \quad i\in\Nn.
$$

\begin{rem}
A Lie $\infty$-action is a (curved) representation of $E$ in $\bar S(V)$ but the image of all the restriction maps is contained in $\Coder (\bar S(V))[1]\subset \End \bar S(V)[1]$.
\end{rem}

\begin{ex}[Adjoint representation and adjoint action]
Let 
$\left( E, M_E \equiv \set{l_k}_{k\in\Nn} \right)$ be a Lie $\infty$-algebra. 
The \emph{adjoint representation} of $E$,  $(E, l_1)$, is  defined by the collection of degree $+1$ maps
\begin{equation*} \label{eq:adjoint:representation:algebra}
\begin{array}{rrcl}
\ad_{k}:& S^{k}(E) &\to& \End(E) \\
 & \;x_1\cdot\ldots \cdot x_k & \mapsto & \ad_{x_1 \cdot \ldots \cdot x_k} := l_{k+1}\left( x_1,\ldots, x_k, \, \, \, -\,\, \right) 
\end{array}, \quad k\geq 1.
\end{equation*}

The \emph{adjoint action} of $E$ is the Lie $\infty$-action of $E$ on itself, given by the family of linear maps (see \cite{CC2022}, \cite{MZ}): 
\begin{equation*}
\begin{array}{rrcl}
{\ad^{k,i}}:& S^{k}(E)\times S^{i}(E) &\to& E \\
 & (x ; e) & \mapsto & l_{i+k}(x,e), \quad {i, k\geq 1}.
\end{array}
\end{equation*}
\end{ex}

\begin{rem}
If we define $\Phi^{0}:=M_{V}$, then
an action is equivalent to a curved Lie $\infty$-morphism between $E$ and the graded Lie algebra $\Coder (\bar S(V))$ (compatible with the Lie $\infty$-structure in $V$) \cite{MZ}. {In this case, $\Phi=\sum_{k\geq 0}\Phi^{k}$ is called a {\emph{curved Lie $\infty$-action}}}.

In this context, we can say that, for each $v\in \bar S(V)$, the coderivation 
$\Phi^0_{v}$ is the image of $v$ by the adjoint action of $V$: $$\Phi^0_v=\ad_v.$$
\end{rem}

\begin{defn} A Lie $\infty$-action of $E$ on $V$
$$
\Phi: (E,\set{ l_k}_{k\in\Nn}) \to (\Coder (\bar S(V))[1], \partial_{M_V}, \brr{\cdot,\cdot})
$$
is said to be \textbf{coherent} if
\begin{equation}\label{eq:defn:coherent:action1}
\brr{\ad_v,\Phi_x}=0 
\end{equation}
and
\begin{equation}\label{eq:defn:coherent:action2}
 \brr{\Phi_{y;v},\Phi_x}=0,
\end{equation}
for all $x,y\in \bar S(E)$ and $v\in \bar S(V)$.
\end{defn}

\begin{rem}
In the context of curved Lie $\infty$-actions, Equation (\ref{eq:defn:coherent:action1}) can be rewritten as
\begin{equation*}\label{eq:defn:coherent:action1:alternative}
 \brr{\Phi^0_v,\Phi_x}=0 , \quad x\in \bar S(E), v\in \bar S(V).
\end{equation*}
\end{rem}
\begin{rem} If we write the bracket $\brr{\cdot , \cdot }$ in terms of the usual commutator of coderivations, we have that
Equations (\ref{eq:defn:coherent:action1}) and (\ref{eq:defn:coherent:action2}) are equivalent to 
\begin{equation*}
\brr{\ad_v,\Phi_x}_{c}=0 \quad\mbox{and}\quad \brr{\Phi_{y;v},\Phi_x}_{c}=0. 
\end{equation*}

\end{rem}

\begin{ex}\label{ex:representation:is:coherent}
Let 
$(E,M_E\equiv\set{ l_k}_{k\in\Nn})$ be a Lie ${\infty}$-algebra and $(V,\d)$ a representation of $E$
given by the Lie $\infty$-morphism $\Phi:E\to \End(V)$. 
Notice that $m_{\bullet\geq 2}=0$ so $\ad_v=0$, $v\in V$. Since $\Phi$ is a representation, we have $\Phi^{\bullet,\bullet\geq 2}=0$ which means that $\Phi_{y;v}\smalcirc\Phi_x(w)=0$, for all $v,w\in V$, $x,y\in\bar S(E)$. This way we see that
 $\Phi$ is a coherent action. 
\end{ex}

\begin{ex}\label{ex:coherent:Lie:action}
Let $(E, \brr{\cdot , \cdot }_E)$ and $(V, \brr{\cdot , \cdot }_V)$ be Lie algebras and $\rho:E\to \Der(V)$ a representation of $E$ in the classic sense.
The representation $\rho$ is equivalent to the action $\Phi:E\to \Coder (\bar S(V))$ defined by the family of maps $\Phi^{k,n}: S^{k}(E)\times S^{n}(V)\to V$, $k,n\geq 1$, given by:
$$
\Phi^{1,1}(x;v)=\rho(x)(v),\quad x\in E, v\in V
$$
$$
\Phi^{\bullet,\bullet}=0, \quad \mbox{otherwise}.
$$
In this case, the action (representation) $\Phi$ is  coherent if and only if 
$$
\brr{\ad_v,\Phi_x}_c(w)=0, \quad w\in V,
$$
which means that 
$$
\brr{v, \Phi_x w}_V - \Phi_x \brr{v,w}_V=0,\quad x\in E, \; v, w\in V.
$$
Since $\Phi_{x}=\rho({x})$ is a derivation of $\brr{\, ,\, }_{V}$ we end up with
$$
\brr{w,\Phi_x v}_V=0, \quad x\in E, \; v, w\in V.
$$
This is equivalent to $\Phi_{x}$ being a central derivation of the Lie algebra $V$, for all $x\in E$.  
In case $V$ is abelian,  the representation is obviously coeherent.
The definition of coherent representation of Lie algebras first appeared  in \cite{TS2023}.
\end{ex}

\begin{rem}
If $E$ is a Lie $\infty$-algebra, even when $V$ is abelian, that is, $m_{k}=0$, for $k>1$, the coherence of a Lie $\infty$-action $\Phi$ is not guaranteed due to the condition (\ref{eq:defn:coherent:action2}):
$$
\brr{\Phi_{x;v},\Phi_y}_c=0 \quad x,y\in \bar S(E), \;v\in \bar S(V).
$$
Clearly, if $\Phi^{\bullet, \bullet\geq 2}=0$ then coherence is guaranteed, but this means that $\Phi$ is, in fact, a Lie $\infty$-representation. 
\end{rem}

The next theorem is one of our main results. 
We show that the coherence of a Lie $\infty$-action is a necessary and sufficient condition for the construction of  a particular Loday $\infty$-algebra in the direct sum of the Lie $\infty$-algebras: the non-abelian hemisemidirect product.
This theorem generalizes two results:  Proposition 6.3 in \cite{STZ21} on  representations of Lie $\infty$-algebras and  Proposition 2.7 in \cite{TS2023} on  coherent representations of Lie algebras.

\begin{thm}\label{thm:loday:brackets:sum}
Let $\Phi: (E,\set{ l_k}_{k\in\Nn}) \to (\Coder (\bar S(V))[1], \partial_{M_V}, \brr{\cdot,\cdot})$ be a Lie $\infty$-action. Consider $E\oplus V$ equipped with brackets $\set{\mathfrak{l}_n}_{n\in \Nn}$ defined by:
\begin{eqnarray}
\lefteqn{ \mathfrak{l}_n\left(x_1+v_1,\ldots,x_{n-1}+v_{n-1}, x_n+v_n\right)=  l_n(x_1,\dots, x_n)} \nonumber\\
&&\quad +\sum_{i=1}^{n-1}\Phi^{i,n-i}(x_1,\ldots, x_{i}; v_{i+1},\dots , v_n) \label{eq:loday:brackets:sum} 
 + m_n(v_1,\dots, v_n) 
\end{eqnarray}

Then $E\oplus V$ is a Loday $\infty$-algebra if and only if $\Phi$ is a coherent action.
\end{thm}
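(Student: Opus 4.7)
The plan is to interpret the brackets $\{\mathfrak{l}_n\}$ as the cogenerators of a candidate codifferential $\mathfrak{M}$ on the Zinbiel coalgebra $\bar T^Z(E\oplus V)$, and to test $\mathfrak{M}^2=0$ by expanding on inputs that are either purely in $E$, purely in $V$, or split so that the first $k$ entries lie in $E$ and the remaining ones in $V$ (the only mixed configurations on which $\mathfrak{l}_n$ is possibly nonzero, by (\ref{eq:loday:brackets:sum})). The pure-$E$ instance of the Loday $\infty$-relation collapses to $M_E^2=0$ and the pure-$V$ instance to $M_V^2=0$, so the whole content of the theorem sits in the mixed case.

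For a mixed input $(x_1,\ldots,x_k;v_{k+1},\ldots,v_n)$, I would sort the summands of the Loday $\infty$-relation (\ref{eq:def:Loday:infty:algebra}) according to the type of the inner bracket: (i) an $l_j$ acting on an all-$E$ chunk, (ii) an $m_j$ acting on an all-$V$ chunk, or (iii) a cross term $\Phi^{a,b}$ acting on a mixed chunk. The Lie $\infty$-morphism condition for $\Phi$, decomposed into its $\partial_{M_V}$-part and its $\brr{\cdot,\cdot}$-part via the DGLA$[1]$ structure on $\Coder(\bar S(V))[1]$, pairs the type-(i) terms with certain type-(iii)-outer/type-(iii)-inner compositions (the $\brr{\Phi,\Phi}$ side) and with type-(ii)-outer/type-(iii)-inner couplings (the $\partial_{M_V}\Phi$ side). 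Writing the Loday identity and the morphism identity in parallel should account for every combination in which the ``$E$-then-$V$'' ordering of the Loday bracket is respected.

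The block-asymmetry of $\mathfrak{l}_n$ then produces residue terms that are absent from the symmetrised Lie $\infty$-relation for the (already known) semidirect product $E\ltimes V$. Two kinds of configurations should survive: (a) an inner $m_j$ consuming a $V$-entry which is then re-acted on by an outer $\Phi_x$ in the ``wrong'' order, i.e.\ $M_V$ appearing \emph{after} the action of $\Phi_x$; and (b) an inner cross bracket $\Phi^{a,b}$ whose output is fed back into another outer $\Phi_y$-action on a residual $V$-chunk. Grouping these residues by the nesting of $E$-arguments relative to $V$-arguments, I expect (a) to assemble precisely into $\brr{\ad_v,\Phi_x}$ applied to the remaining $V$-inputs, and (b) into $\brr{\Phi_{y;v},\Phi_x}$. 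Because these assemblies are not killed by the morphism condition alone, the Loday identity forces each of them to vanish separately, yielding exactly (\ref{eq:defn:coherent:action1}) and (\ref{eq:defn:coherent:action2}); conversely, coherence makes these residues vanish and the morphism identity then closes the remaining terms, giving both directions.

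The main obstacle will be the sign and shuffle bookkeeping: (\ref{eq:def:Loday:infty:algebra}) sums over shuffles with a distinguished last slot, whereas the Lie $\infty$-morphism condition and the coherence relations are naturally written with ordinary unshuffles and symmetric permutations. To keep this under control I would first carry out the low-order mixed case of two $E$-inputs and two $V$-inputs to pin down the Koszul signs, and then either induct on the total arity $n$ or, more conceptually, invoke Remark \ref{rem:symmetric:Loday:are:Lie}: symmetrising $\mathfrak{M}$ recovers the known semidirect Lie $\infty$-structure on $E\ltimes V$, so the obstruction to $\mathfrak{M}^2=0$ lives entirely in the non-symmetric part of the Loday bracket, which is exactly where the two coherence residues have been identified.
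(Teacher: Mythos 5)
Your overall strategy coincides with the paper's: view the brackets as a coderivation $Q$ of the Zinbiel coalgebra $T^{Z}(E\oplus V)$, test the Jacobiator, cancel most terms against the Lie $\infty$-morphism identity for $\Phi$, and identify the surviving residues with $\brr{\ad_{v},\Phi_{x}}$ and $\brr{\Phi_{y;v},\Phi_{x}}$. However, there is a concrete gap in the reduction step. You restrict the mixed test inputs to words of the form $(x_{1},\ldots,x_{k},v_{k+1},\ldots,v_{n})$, on the grounds that these are the only configurations on which $\mathfrak{l}_{n}$ is nonzero. That is true of the \emph{bracket}, but the Loday $\infty$-relation \eqref{eq:def:Loday:infty:algebra} must be verified on \emph{every} tensor word in $E\oplus V$, and the Zinbiel coderivation is not symmetric in its inputs: the relative order of $E$- and $V$-entries in the word changes which compositions occur and with which signs. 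On the ``$E$'s then $V$'s'' words the paper shows that the Jacobiator vanishes using only the fact that $\Phi$ is a Lie $\infty$-action --- no coherence is needed there, because the two halves of each would-be residue are exactly the $\partial_{M_{V}}\Phi$ and $\brr{\Phi,\Phi}$ contributions of the morphism identity. The coherence conditions \eqref{eq:defn:coherent:action1} and \eqref{eq:defn:coherent:action2} appear only on words in which a $V$-block \emph{precedes} an $E$-block, such as $v\otimes y\otimes w$ and $x\otimes v\otimes y\otimes w$ (and their longer alternating generalizations $v_{1}\otimes y_{1}\otimes\cdots\otimes v_{n}\otimes y_{n}\otimes w$), where one finds $J(v,y,w)=-p\smalcirc\brr{\ad_{\pi(v)},\Phi_{\pi(y)}}(\pi(w))$ with no compensating term from the morphism identity. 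So, as written, your plan would examine only configurations on which the obstruction is invisible and would wrongly conclude that no extra hypothesis is required; it cannot produce the ``only if'' direction, and it leaves the ``if'' direction unverified on the omitted interleavings.

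A secondary caveat: the closing appeal to Remark \ref{rem:symmetric:Loday:are:Lie} does not substitute for the missing computation. Symmetrizing $Q$ recovers the Lie $\infty$-structure of $E\ltimes V$, but $\pi\smalcirc Q^{2}=(\pi\smalcirc Q)\smalcirc Q$ controls only the symmetrization of the Jacobiator; there is no well-defined ``non-symmetric part'' of $Q^{2}$ on which the obstruction can be localized without evaluating the Jacobiator on the non-monotone interleavings directly. The fix is simply to extend your case analysis to arbitrary orderings of $E$- and $V$-entries, which is exactly the organization of the paper's proof.
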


\begin{proof}
Let 
$Q$ be the coderivation of the Zinbiel coalgebra $T^{Z}(E\oplus V)$ defined by the brackets $\set{\mathfrak{l}_n}_{n\in\Nn}$.
Proving that $Q^2=0$ is equivalent to showing that the Jacobiator vanishes:
$$J(x_1+v_1,\dots, x_n+v_n)=\sum_{k=1}^{n} \mathfrak l_k\smalcirc Q\left(x_1+v_1,\dots, x_n+v_n\right)=0$$
for all $x_1+v_1,\dots, x_n+v_n\in E\oplus V$.
Therefore we want to prove that coherence of the action is equivalent to the vanishing of   the Jacobiator.

We immediately see that
\begin{align*}
 J(x_1,\dots,x_n)=J(v_1,\dots,v_n)=0,\quad x_1,\dots,x_n \in E, v_1,\dots,v_n\in V,
\end{align*}
because $\set{l_k}_{k\in \Nn}$ and $\set{m_k}_{k\in \Nn}$ define Lie $\infty$-structures on $E$ and $V$, respectively.

Also, since $$\mathfrak{l}_k(x_1+v_1,\dots, x_{k-1}+v_{k-1}, x_k)=l_k(x_1,\ldots, x_k), \quad k\in \Nn,$$ we have that
$$J(x_1+v_1,\dots, x_{n-1}+v_{n-1}, x_n)=J(x_1,\ldots,x_n)=0,$$
for all $x_1,\dots,x_n \in E, v_1,\dots,v_{n-1}\in V.$

It remains to prove $J(x_1+v_1,\dots, x_{n-1}+v_{n-1}, v_n)=0$ if and only if $\Phi$ is a coherent action.



In what follows, we will use the Sweedler notation for the coshuffle coproduct: $$\Delta^{c}(v)=v_{(1)}\otimes v_{(2)},\quad v\in\bar T(V).$$
If $v\in V$, then $\Delta^c(v)=0$ and we consider $v_{(1)}=v_{(2)}=0$. 

First notice that, for $y\in \bar T(E)$ and $w\in\bar T(V)$, we have
$$
Q(y\otimes w)=M_{E}^{Z}(y)\otimes w + \Phi_{\pi(y)}^{Z}(w) + (-1)^{|y_{(1)}|}y_{(1)}\otimes \Phi_{\pi(y_{(2)})}^{Z}(w)+(-1)^{|y|}y\otimes M_{V}^{Z}(w).
$$
Therefore
\begin{eqnarray}
 J(y,w)&=&\Phi^{\bullet}_{\pi(M_E^{Z}({y}))}\pi(w)+m_\bullet\smalcirc \pi( \Phi^{Z}_{\pi(y)} (w)) + (-1)^{|y|}\Phi^{\bullet}_{\pi(y)} \pi(M^{Z}_V(w)) \nonumber\\
 &&\quad + (-1)^{|y_{{(1)}}|}\Phi^{\bullet}_{\pi(y_{{(1)}})}\pi(\Phi^{Z}_{\pi(y_{{(2)}})}(w)).
\end{eqnarray}
All coderivations in this equation are defined by symmetric linear maps. Also  $\pi:T^{c}(V)\to \bar S(V)$ is a comorphism, so we have:
\begin{eqnarray*}
J(y,w)&=&\Phi^{\bullet}_{M_E(\pi({y}))}\pi(w)+m_\bullet( \Phi_{\pi(y)} \pi(w)) + (-1)^{|y|}\Phi^{\bullet}_{\pi(y)} (M_V(\pi(w)))\\
  && + (-1)^{|y_{{(1)}}|}\Phi^{\bullet}_{\pi(y_{{(1)}})}\Phi_{\pi(y_{{(2)}})}(\pi(w)).
\end{eqnarray*}

 Taking into account that $\Phi$ is a Lie $\infty$-action we conclude that $J(y,w)=0$ (for this the coherence of the action is not needed).


Now notice that for $v,w\in \bar T(V)$, $y\in\bar T(E)$, we have
\begin{eqnarray}\label{eq:Q:v:y:w}
    Q(v\otimes y\otimes w)&=& M_V^Z(v)\otimes y \otimes w + (-1)^{|v|} v\otimes M_E^Z(y) \otimes w \nonumber\\
    &&+ (-1)^{|v|+|y|}v\otimes y\otimes M_V^Z(w)+ (-1)^{|v|+|y_{(1)}|}v\otimes y_{(1)}\otimes \Phi^Z_{\pi(y_{(2)})}(w) \nonumber\\
    &&+ (-1)^{|v_{(1)}|+|y|(1+|v_{(2)}|)} v_{(1)}\otimes y\otimes \ad^Z_{\pi(v_{(2)})} (w)\\
    && + (-1)^{|v|}v\otimes \Phi^Z_{\pi(y)}(w) + (-1)^{|y|(|v|+1)}  y\otimes \ad^Z_{\pi(v)} (w) \nonumber
\end{eqnarray}
hence
\begin{eqnarray*}
J(v,y, w)&=&(-1)^{|v|}m_\bullet(v,\Phi^{Z}_{\pi(y)} (w))+(-1)^{|y|(|v|+1)}\Phi^{\bullet}_{\pi(y)}(\ad_{\pi(v)}(\pi(w)))\\
&=&(-1)^{|v|}m_\bullet(\pi(v),\Phi_{\pi(y)} \pi(w)) + (-1)^{|y|(|v|+1)}\Phi^{\bullet}_{\pi(y)}(\ad_{\pi(v)}(\pi(w)))\\
&=&(-1)^{|v|}p\smalcirc\brr{\ad_{\pi(v)}, \Phi_{\pi(y)}}_c(\pi(w))\nonumber\\
&=&-p\smalcirc\brr{\ad_{\pi(v)}, \Phi_{\pi(y)}}(\pi(w)).\nonumber
\end{eqnarray*}

Similar calculations lead to the general formula, for  $y_{1},\ldots,y_{n}\in\bar T(E)$ and $v_{1},\ldots,v_{n},w\in \bar T(V)$:
 \begin{align*}
 J(v_1,y_1,\ldots, v_n, y_n, w )=&(-1)^{\sum_{i=2}^{n}\sum_{j=1}^{i-1}|v_i||y_j|}(-1)^{|v|} p\smalcirc\brr{\ad_{\pi(v)},\Phi_{\pi(y)}}_c(\pi(w)),\\
=&- (-1)^{\sum_{i=2}^{n}\sum_{j=1}^{i-1}|v_i||y_j|} p\smalcirc\brr{\ad_{\pi(v)},\Phi_{\pi(y)}}(\pi(w)),
 \end{align*}
 where $v=v_1\otimes\ldots\otimes v_n$, $y=y_1\otimes \ldots \otimes y_n$ and $n\geq 2$.

Therefore, $J(v_1,y_1,\ldots, v_n, y_n, w )=0$, for all $v_1,\ldots,v_n,w\in\bar T(V)$ and $y_1,\ldots,y_n\in\bar T(E)$ if and only if
\begin{equation}\label{eq:Theorem:first:commutator}
 \brr{\ad_{\bar v}, \Phi_{\bar y}}(\bar w)=0,\quad \mbox{for all } \bar v, \bar w\in \bar S(V),\, \bar y\in\bar S(E).
\end{equation}

Now, for $x\in E$, $v,w\in \bar T(V)$, $y\in\bar T(E)$, we have
\begin{eqnarray*}
   \lefteqn{ Q(x\otimes v\otimes y\otimes w) = l_1(x) \otimes v\otimes y\otimes w + (-1)^{|x|} x\otimes Q(v\otimes y\otimes w)}\\
    &&  +  \Phi^Z_{x} (v)\otimes y\otimes w + (-1)^{|v|(|x|+1)} v\otimes  \ad^Z_{x} (y) \otimes w \\
    &&+ (-1)^{|y|(|x|+|v|+1)}y\otimes \Phi^Z_{x;\pi(v)}(w) \\
&&    + (-1)^{|v_{(1)}|(|x|+1)+|y|(|x|+|v_{(2)}|+1)}v_{(1)}\otimes y\otimes \Phi^Z_{x;\pi(v_{(2)})}(w) \\
    &&+ (-1)^{(|v|+|y_{(1)}|)(|x|+1)} v\otimes y_{(1)}\otimes \Phi^Z_{x\cdot \pi(y_{(2)})}(w) \\
    &&  +(-1)^{|v|(|x|+1)}v\otimes \Phi^Z_{\pi (x\otimes y)}(w) + (-1)^{(|v| + |y|)(1+|x|)}v\otimes y\otimes \Phi^Z_{x}(w).
\end{eqnarray*}
and 
\begin{eqnarray*}
\lefteqn{J(x,v,y,w)=(-1)^{|y||v|+|x|+|y|}\Phi^{\bullet}_{\pi(x\otimes y)}(\ad_{\pi(v)}(\pi(w)))}\\
&&+(-1)^{|v|(|x|+1)}m_\bullet(\pi(v),\Phi_{\pi(x\otimes y)} \pi(w))\\
&&+ (-1)^{|v|+|x|}\Phi_{x}^{\bullet}(\pi(v), \Phi_{\pi(y)} \pi(w)) + (-1)^{(|x|+|v|+1)|y|} \Phi_{\pi(y)}^{\bullet}(\Phi_{x;\pi(v)}(\pi(w)))\\
&=&(-1)^{|x||v|+|v|}p\smalcirc\brr{\ad_{\pi(v)},\Phi_{\pi(x\otimes y)}}_c (\pi(w))\\
&&+
(-1)^{|v|+|x|}p\smalcirc\brr{\Phi_{x;\pi(v)},\Phi_{\pi(y)}}_c(\pi(w)). \\
&=&-(-1)^{|x||v|}p\smalcirc\brr{\ad_{\pi(v)},\Phi_{\pi(x\otimes y)}} (\pi(w))
-p\smalcirc\brr{\Phi_{x;\pi(v)},\Phi_{\pi(y)}}(\pi(w)). 
\end{eqnarray*}

Taking into account Equation (\ref{eq:Theorem:first:commutator}) we conclude that
$J(x,v,y,w)=0$, for all $x\in E$, $y\in \bar T(E)$, $v,w\in \bar T(V)$ if and only if
\begin{equation}\label{eq:Theorem:second:commutator}
   \brr{\Phi_{x;\bar v},\Phi_{\bar y}}(\bar w), \quad \mbox{for all } x\in E, \; \bar y\in \bar S(E), \; \bar v, \bar w\in \bar S(V).
\end{equation}

Consider $x\in T^n(E)$, $n\geq 2$, and again $v,w\in \bar T(V)$, $y\in \bar T(E)$. Then,
\begin{eqnarray*}
    \lefteqn{Q(x\otimes v\otimes y\otimes w) = M_E^Z(x) \otimes v\otimes y\otimes w + (-1)^{|x|} x\otimes Q(v\otimes y\otimes w)}\\
    && + (-1)^{|x_{(1)}|} x_{(1)}\otimes \Phi^Z_{\pi(x_{(2)})} (v) \otimes y\otimes w + \Phi^Z_{\pi(x)} (v)\otimes y\otimes w\\
    && +  (-1)^{|x_{(1)}| + |v|(|x_{(2)} +1|)} x_{(1)}\otimes v\otimes \ad^Z_{\pi(x_{(2)})} (y) \otimes w \\
    &&+ (-1)^{|v|(|x|+1)} v\otimes  \ad^Z_{\pi(x)} (y) \otimes w\\
    && +  (-1)^{|x_{(1)}|+|y|(|x_{(2)}| + |v| +1)} x_{(1)}\otimes y\otimes \Phi^Z_{\pi(x_{(2)};v)} (w) \\
    &&+ (-1)^{|y|(|x|+|v|+1)}y\otimes \Phi^Z_{\pi(x;v)}(w) \\
    && + (-1)^{|x_{(1)}| + |v_{(1)}|(|x_{(2)}|+1)+|y|(|x_{(2)}|+   |v_{(2)}|+1)}x_{(1)}\otimes v_{(1)} \otimes y\otimes \Phi^Z_{\pi(x_{(2)};v_{(2)})}(w) \\
    &&+ (-1)^{|v_{(1)}|(|x|+1)+|y|(|x|+|v_{(2)}|+1)}v_{(1)}\otimes y\otimes \Phi^Z_{\pi(x;v_{(2)})}(w) \\
 && +  (-1)^{|x_{(1)}|+|v|(|x_{(2)}| +1)} x_{(1)}\otimes v\otimes \Phi^Z_{\pi(x_{(2)}\otimes y)} (w)\\
 &&+ (-1)^{|v|(|x|+1)}v\otimes \Phi^Z_{\pi(x\otimes y)}(w) \\
    && + (-1)^{|x_{(1)}| + (1+|x_{(2)}|)(|v|+|y_{(1)}|)}x_{(1)}\otimes v \otimes y_{(1)}\otimes \Phi^Z_{\pi(x_{(2)}\otimes y_{(2)})}(w) \\
    &&+ (-1)^{|v|(|x|+1)+|y_{(1)}|(|x|+1)} v\otimes y_{(1)}\otimes \Phi^Z_{\pi(x\otimes y_{(2)})}(w) \\
    && (-1)^{(|v| + |y|)(1+|x|)}v\otimes y\otimes \Phi^Z_{x}(w).
\end{eqnarray*}
Equation (\ref{eq:Q:v:y:w}) and 
 the brackets (\ref{eq:loday:brackets:sum})  yield

\begin{eqnarray*}
\lefteqn{J(x,v,y,w)=(-1)^{|v|+|x|} \Phi_{\pi(x);\pi(v)}^{\bullet}( \Phi_{\pi(y)}(\pi(w)))} \\
&&+ (-1)^{|x|+|y|(|v|+1)}\Phi^{\bullet}_{\pi(x\otimes y)}(\ad_{\pi(v)}(\pi(w)))\\
&& +  (-1)^{|x_{(1)}|+|y|(|x_{(2)}| + |v| +1)} \Phi^\bullet_{\pi(x_{(1)}\otimes y)}( \Phi_{\pi(x_{(2)});\pi(v)} (\pi(w)))\\
&&+ (-1)^{|y|(|x|+|v|+1)}\Phi^{\bullet}_{\pi(y)}( \Phi_{\pi(x);\pi(v)}(\pi(w))) \\
&&+ (-1)^{|v|(|x|+1)}m_{\bullet}(v ,  \Phi^Z_{\pi(x\otimes y)}(w)) \\
&&+  (-1)^{|x_{(1)}|+|v|(|x_{(2)}| +1)} \Phi^{\bullet}_{\pi(x_{(1)});\pi(v)} (\Phi_{\pi(x_{(2)}\otimes y)} (\pi(w) ))\\
&=& (-1)^{(|x|+1)|v|}p\smalcirc\brr{\ad_{\pi(v)},\Phi_{\pi(x\otimes y)}}_c (\pi(w))\\
&&
+(-1)^{|x|+|v|}p\smalcirc\brr{\Phi_{\pi(x);\pi(v)},\Phi_{\pi(y)} }_c(\pi(w))\\
&&+(-1)^{(|x_{(2)}|+1)|v| + |x_{(1)}|} p\smalcirc\brr{\Phi_{\pi(x_{(1)});\pi(v)},\Phi_{\pi(x_{(2)}\otimes y)}}_c(\pi(w)). \\
&=&- (-1)^{|x||v|}p\smalcirc\brr{\ad_{\pi(v)},\Phi_{\pi(x\otimes y)}} (\pi(w))\\
&&
-(-1)^{|x|}p\smalcirc\brr{\Phi_{\pi(x);\pi(v)},\Phi_{\pi(y)} }(\pi(w))\\
&&-(-1)^{|x_{(2)}||v| + |x_{(1)}|} p\smalcirc\brr{\Phi_{\pi(x_{(1)});\pi(v)},\Phi_{\pi(x_{(2)}\otimes y)}}(\pi(w)). \\
\end{eqnarray*}
Similar computations lead to the general formula: 
\begin{eqnarray*}
\lefteqn{ J(x,v_1,y_1,v_2,\ldots, v_n, y_n, w)=a (-1)^{|v|+ |x||v|} p\smalcirc\brr{\ad_{\pi(v)},\Phi_{\pi(x\otimes y)}}_c (\pi(w))}\\
&&+a (-1)^{|x|+|v|}p\smalcirc\brr{\Phi_{\pi(x);\pi(v)},\Phi_{\pi(y)}}_c(\pi(w))\\
&&+a (-1)^{|x_{(1)}| + |v| + |x_{(2)}||v|} p\smalcirc\brr{\Phi_{\pi(x_{(1)});\pi(v)},\Phi_{\pi(x_{(2)}\otimes y)}}_c(\pi(w)),
\end{eqnarray*}
for all
 $x,y_{1},\ldots, y_{n}\in \bar T(V)$, $v_{1}, \ldots, v_{n}\in \bar T(V)$, where $v=v_{1}\otimes\ldots \otimes v_{n}$,   $y=y_{1}\otimes\ldots\otimes y_{n}$ and $\ds a=(-1)^{\sum_{i=2}^{n}\sum_{j=1}^{i-1}|v_i||y_j|}$, $n\geq 2$.

An induction argument together with Equations (\ref{eq:Theorem:first:commutator}) and (\ref{eq:Theorem:second:commutator})  allows us to conclude that  $J(x,v,y,w)=0$, for all $x,y\in\bar T(E)$ and $v,w\in\bar T(V)$, if and only if
\begin{equation}\label{eq:Theorem:third:commutator}
    \brr{\Phi_{\bar x;\bar v},\Phi_{\bar y} }(\bar w),\quad \mbox{ for all } \bar x, \bar y\in\bar S(E), \bar v,\bar w\in \bar S(V).
\end{equation}

Equations (\ref{eq:Theorem:first:commutator}), (\ref{eq:Theorem:second:commutator}) and (\ref{eq:Theorem:third:commutator}) express the coherence of the action $\Phi$ and we conclude that it is equivalent to the  Jacobiator being zero.
\end{proof}


Following \cite{TS2023}, we define
\begin{defn} Let $(E,\set{ l_k}_{k\in\Nn})$ and $(V, M_V\equiv\set{m_k}_{k\in\Nn})$ be Lie $\infty$-algebras and
 $\Phi: (E,\set{ l_k}_{k\in\Nn}) \to (\Coder (\bar S(V))[1], \partial_{M_V}, \brr{\cdot,\cdot})$  a coherent Lie $\infty$-action. The Loday $\infty$-algebra $(E\oplus V, \set{\mathfrak l_k}_{k\in \Nn})$ given by 
\begin{eqnarray*}
\lefteqn{ \mathfrak{l}_n\left(x_1+v_1,\ldots,x_{n-1}+v_{n-1}, x_n+v_n\right)=  l_n(x_1,\dots, x_n)} \nonumber\\
&&\quad +\sum_{i=1}^{n-1}\Phi^{i,n-i}(x_1,\ldots, x_{i}; v_{i+1},\dots , v_n) 
 + m_n(v_1,\dots, v_n), 
\end{eqnarray*}
for all $x_1+v_1, \ldots, x_n+v_n\in E\oplus V$, is called the {\textbf{non-abelian hemisemidirect product}} of $E$ and $V$ and is denoted by $E\ltimes^{\Phi} V$.
\end{defn}

\begin{ex}
  Let $(E,\brr{\cdot , \cdot }_E)$ and $(V, \brr{\cdot ,\cdot}_V)$ be Lie algebras and $\rho:E\to \Der(V)$ a coherent Lie representation (see Example \ref{ex:coherent:Lie:action}):
  $$\brr{w,\rho_x(v)}_V=0, \quad x\in E, \, v,w\in V.$$
This particular case was considered 
 in \cite{TS2023} and the non-abelian hemisemidirect product $E\ltimes^{\Phi} V $ is the Loday algebra (or the left Leibniz algebra) with  product given by
$$
(x+v)\smalcirc( y+w)= \brr{x,y}_E + \Phi_x w + \brr{v,w}_V.
$$

Notice that when the representation $V$ is an abelian Lie algebra, then it is always coherent and we obtain the usual hemisemidirect product introduced in \cite{KW2001}. 
\end{ex}

\begin{ex}
Let 
$(E,M_E\equiv\set{ l_k}_{k\in\Nn})$ be a Lie ${\infty}$-algebra 
and $\Phi:\bar S(E)\to \End(V)$ a representation on the complex $(V, \d)$. The  action $\Phi$ is coherent (see Example \ref{ex:representation:is:coherent}).

The non-abelian hemisemidirect product $E\ltimes^{\Phi} V$ is  defined by the brackets:
\begin{eqnarray*}
    \mathfrak l_1(x_1+v_1)&=&l_1 x_1 + \d\,v_1,\\
    \mathfrak{l}_n(x_1+v_1,\ldots , x_n+v_n)&=&l_n(x_1,\ldots, x_n) + \Phi^{n-1,1}(x_1,\ldots,x_{n-1}; v_n),
\end{eqnarray*}
for all $x_1+v_1,\ldots, x_n+v_n \in E\oplus V$, $n\geq 2$ .

\end{ex}


\begin{rem}
A Lie $\infty$-action $\Phi: (E,M_{E}\equiv\set{ l_k}_{k\in\Nn}) \to (\Coder (\bar S(V))[1], \partial_{M_V}, \brr{\cdot,\cdot})$, is a special representation of $E$ on the complex $(\bar S(V),M_V)$. Since any representation is a coherent action, we have the non-abelian hemisemidirect product $E\ltimes^{\Phi} \bar S(V)$ with brackets:
\begin{eqnarray*}
\bar{\mathfrak{l}}_1(x_{1}+v_{1})&=&l_{1}(x_{1})+ M_{V}(v_{1}),\\
\bar{\mathfrak{l}}_n(x_{1}+v_{1},\ldots, x_{n}+v_{n})&=&l_{n}(x_{1},\ldots,x_{n}) + \Phi_{x_{1}\cdot\ldots\cdot {x_{n-1}}} v_{n},
\end{eqnarray*}
for all $x_{1},\ldots,x_{n}\in E$, $v_{1},\ldots, v_{n}\in\bar S(V)$, $n\geq 2$.

If $\Phi$ is a coherent Lie $\infty$-action, then $E\ltimes^{\Phi} V$ is also a Loday $\infty$-algebra. The brackets $\set{\mathfrak{l}_k}_{k\in\Nn}$ in $E\ltimes^{\Phi} V$ and the brackets $\set{\bar{\mathfrak{l}}_k}_{k\in\Nn}$ in $E\ltimes^{\Phi} \bar S(V)$ are related by
\begin{eqnarray*}
\mathfrak{l}_{1}(x_{1}+v_{1})&=&\bar{\mathfrak{l}}_{1} (x_{1}+v_{1})\\
 \mathfrak{l}_n(x_1+v_1,\ldots, x_n+v_n)&=& \bar{\mathfrak{l}}_n(x_1,\ldots, x_n) \\ 
 &&  \!\!\!\!\!\!\!\!\!\!\!\!\!\!\!\!\!\!\!\!\!\!\!\!\!\!\!\!\!\!+p_{V}\smalcirc\left(\sum_{i=1}^{n-1}\bar{\mathfrak{l}}_{i+1}(x_1,\ldots, x_i, v_{i+1}\cdot\ldots\cdot v_n) + \bar{\mathfrak{l}}_1(v_1\cdot\ldots\cdot  v_n)\right),
\end{eqnarray*}
for all $x_{1},\ldots,x_{n}\in E$, $v_{1},\ldots, v_{n}\in V$, $n\geq 2$, where $p_V:\bar S(V)\to V$ is the projection map. 
\end{rem}



\section{Non-abelian embedding tensors}\label{sec:3}

Let 
$(E,M_E\equiv\set{ l_k}_{k\in\Nn})$ and $(V,M_V\equiv\set{ m_k}_{k\in\Nn})$ be Lie ${\infty}$-algebras, 
$$
\Phi: (E,\set{ l_k}_{k\in\Nn}) \to (\Coder (\bar S(V))[1], \partial_{M_V}, \brr{\cdot,\cdot})
$$
a coherent Lie $\infty$-action and $E\ltimes^{\Phi} V$ the non-abelian hemisemidirect product defined by $\Phi$. 

The graded vector space of linear maps between $\bar T(V)$ and $E$ will be denoted by $\mathfrak{h}:=\Hom(\bar T(V),E)$. It can be identified with the space of coalgebra morphisms between $T^{Z}(V)$ and $T^{Z}(E)$ and also with a subspace of $\Coder ({T}^{Z}(E\oplus V))$.


The space $\mathfrak{h}$ together with the coherent action defines a $V$-data \cite{V05} and, consequently, $\mathfrak{h}$ acquires a  Lie $\infty$-algebra structure:

\begin{prop}
 Let us consider:
\begin{enumerate}
 \item the graded Lie algebra $\mathcal{L}:=(\Coder(T^{Z}(E\oplus V)), \brr{\cdot , \cdot }_{c})$;
 \item the abelian graded Lie subalgebra $\mathfrak{h}$;
 \item the projection $\mathcal{P}:\mathcal{L}\to \mathfrak{h}$ onto $\mathfrak{h}$;
 \item  the coderivation $Q$ associated with the Loday ${\infty}$-algebra $E\ltimes^{\Phi} V$.
\end{enumerate}
Then, $(\mathcal{L}, \mathfrak{h}, \mathcal{P},Q)$ is a $V$-data and
 $\mathfrak{h}$ is equipped with a Lie $\infty$-algebra structure given by:
\begin{align*}
\partial_{k}\big(\mathfrak t_{1},\ldots, \mathfrak t_{k})=\mathcal{P}([[\ldots\brr{Q, \mathfrak{t}_{1}}_{c}\ldots]_{c}, \mathfrak{t}_{k}]_{c}\big), \quad \mathfrak t_{1},\ldots, \mathfrak t_{k}\in\mathfrak{h}, \, k\geq 1.
\end{align*}
\end{prop}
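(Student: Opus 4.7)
My plan is to recognize this as an instance of Voronov's higher derived brackets construction \cite{V05}: once the quadruple $(\mathcal{L}, \mathfrak{h}, \mathcal{P}, Q)$ is shown to be a $V$-data, the formula for the brackets $\partial_k$ follows automatically. So the task reduces to verifying the four axioms: (i) $\mathcal{L}$ is a graded Lie algebra; (ii) $\mathfrak{h}$ is an abelian graded Lie subalgebra of $\mathcal{L}$; (iii) $\mathcal{P}$ is a projection onto $\mathfrak{h}$ whose kernel is a graded Lie subalgebra; and (iv) $Q$ is a degree $+1$ element of $\ker\mathcal{P}$ satisfying $[Q,Q]_c=0$.

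Condition (i) is standard, being the graded commutator on coderivations. For (iv), the coderivation $Q$ has degree $+1$ by construction, and $[Q,Q]_c = 2\,Q\circ Q = 0$ is precisely the content of Theorem~\ref{thm:loday:brackets:sum}, which applies because $\Phi$ is coherent. Moreover, the restriction $\mathfrak{l}_n|_{V^{\otimes n}}$ equals $m_n$, taking values in $V$ rather than $E$; hence the $\mathfrak{h}$-component of $Q$ is zero and $Q \in \ker\mathcal{P}$.

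The substance of the proof lies in (ii) and (iii). For (ii), I would view $\mathfrak{h}$ as the image of the embedding $\Hom(\bar T(V), E) \hookrightarrow \Coder(T^Z(E\oplus V))$ given by extending each family $\mathfrak{t}_k: V^{\otimes k}\to E$ as a Zinbiel coderivation, with the convention that $\mathfrak{t}_k$ vanishes on any tensor containing an $E$-factor. The key observation is that, in the Zinbiel coderivation formula, the last argument of each $\mathfrak{t}_k$ is distinguished and not shuffled, and the $E$-valued output sits at precisely that distinguished position. Thus, in any composition $\hat{\mathfrak{t}}_1\circ\hat{\mathfrak{t}}_2$, a non-vanishing term requires $\hat{\mathfrak{t}}_1$ to receive the $E$-element produced by $\hat{\mathfrak{t}}_2$ as a \emph{non-final} (shuffled) input. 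A careful bookkeeping of shuffle coefficients and Koszul signs then matches these terms with those of $(-1)^{|\mathfrak{t}_1||\mathfrak{t}_2|}\hat{\mathfrak{t}}_2\circ\hat{\mathfrak{t}}_1$, so the graded commutator vanishes.

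For (iii), the projection $\mathcal{P}$ sends a coderivation, viewed as a family of maps $(E\oplus V)^{\otimes k}\to E\oplus V$, to its component in $\Hom(\bar T(V), E)$; that $\mathcal{P}^2 = \mathcal{P}$ is immediate. To see that $\ker\mathcal{P}$ is closed under $[\cdot,\cdot]_c$, I would argue that any non-zero restriction $\bar T(V)\to E$ arising from $[Q_1,Q_2]_c$ forces both operators to feed purely $V$-valued inputs and extract an $E$-valued output; by the Zinbiel coderivation formula this is only possible when at least one $Q_i$ already has a non-trivial $\bar T(V)\to E$ component, contradicting $Q_i\in\ker\mathcal{P}$. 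The hardest single step is the sign-and-shuffle bookkeeping underlying (ii); no conceptual obstacle arises, but the verification is delicate. Once all four axioms are established, Voronov's theorem directly yields the claimed Lie $\infty$-structure on $\mathfrak{h}$ with the displayed brackets.
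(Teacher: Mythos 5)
Your proposal is correct and is in substance the same argument as the paper's: the paper's proof consists of the single remark that the verification of Voronov's $V$-data axioms in Proposition 6.4 of \cite{STZ21} carries over verbatim from representations to coherent actions, after which the Lie $\infty$-structure on $\mathfrak{h}$ is exactly the higher-derived-brackets construction of \cite{V05}; you simply make that verification explicit. One minor simplification for your step (ii): no matching of shuffle terms and Koszul signs between the two compositions is needed, since every term in the image of a coderivation coming from $\mathfrak{h}$ contains an $E$-factor, on which the corestriction maps of any other element of $\mathfrak{h}$ vanish; hence $p\smalcirc \mathfrak{t}_1\smalcirc\mathfrak{t}_2=0$ and $p\smalcirc \mathfrak{t}_2\smalcirc\mathfrak{t}_1=0$ separately, and because a coderivation of the cofree conilpotent Zinbiel coalgebra is determined by its corestriction, the commutator $\brr{\mathfrak{t}_1,\mathfrak{t}_2}_{c}$ is already zero. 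The same corestriction argument gives the closure of $\Ker\mathcal{P}$ under $\brr{\cdot,\cdot}_{c}$ exactly as you describe.
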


\begin{proof}
Although we are dealing with Lie $\infty$-actions rather than representations, the argument used in the proof of  Proposition 6.4 in \cite{STZ21}, to show that we have a $V$-data, can be used in an identical way in this case.
\end{proof}

Now we are in position to define a non-abelian embedding tensor with respect to a coherent Lie $\infty$-algebra action $\Phi:E\to \Coder(\bar S(V))$.

\begin{defn}
A \textbf{non-abelian embedding tensor} on $E$ with respect to the coherent action $\Phi$ 
is a (Zinbiel) coalgebra morphism $T\equiv\set{T_{k}:T^{k}(V)\to E}_{k\in \Nn}$, such that the induced coderivation $\mathfrak{t}\in\mathfrak{h}$ is a Maurer-Cartan element of $\mathfrak{h}$:
\begin{equation}\label{def:embedding:Maurer:Cartan:element}
\mathcal{P}\left(\brr{Q,\mathfrak{t}}_{c} + \frac{1}{2}\brr{\brr{Q,\mathfrak t}_{c},\mathfrak{t}}_{c} + \dots \right)=0.
\end{equation}
\end{defn}

\begin{rem} By setting  $$e^{\brr{\, , \mathfrak t}}(Q)=Q + \brr{Q, \mathfrak{t}}_c  + \frac{1}{2}\brr{\brr{Q, \mathfrak{t}}_c, \mathfrak{t}}_c + \ldots + \frac{1}{n!} \brr{\ldots \brr{\brr{Q, \mathfrak{t}}_c,\mathfrak{t}}_c \ldots  \mathfrak{t}}_c  +\ldots$$
 we can rewrite the Equation (\ref{def:embedding:Maurer:Cartan:element})  as
\begin{equation*}
 \mathcal{P}\left(e^{\brr{\, , \mathfrak t}}(Q) \right)=0.
\end{equation*}
\end{rem}

Let $T: T^{Z}(V)\to T^{Z}(E)$ be
a non-abelian embedding tensor with respect to the coherent action $\Phi$. Suppose $T$ is defined by the family of maps $\set{T_{k}:T^{k} (V)\to E}_{k\geq 1}$ and
consider $\mathfrak t\in\Coder( T^{Z}(E\oplus V))$ the associated coderivation. 

The projection $p_V:E\oplus V \to V$ defines the strict comorphism 
 $p_{T(V)}:T^Z(E\oplus V)\to T^Z(V)$. The composition $T\smalcirc p_{T(V)}$ is the comorphism defined by the
   family of linear maps $\set{T_k\smalcirc p_{T(V)}: T^k(E\oplus V)\to E}_{k\geq 1}$. This composition  can be see  as an extension of $T$ to $\bar T(E\oplus V)$ so  we also denote it by $T$.

\begin{lem}\label{lem:exponential:coderivation}
The exponential of the coderivation $\mathfrak{t}$ is the comorphism $\mathrm{Id}+T:T^{Z}(E\oplus V)\to T^{Z}(E\oplus V):$ 
$$
e^{\mathfrak{t}}=\sum_{k=0}^{\infty}\frac{\mathfrak{t}^{k}}{k!}=\mathrm{Id} + T.
$$
\end{lem}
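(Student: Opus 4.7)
The plan is to compare $e^{\mathfrak{t}}$ and $\mathrm{Id}+T$ as Zinbiel comorphisms by matching their cogenerators, invoking uniqueness of the coalgebra morphism out of the cofree coalgebra $T^{Z}(E\oplus V)$ given its cogenerators. First, every cogenerator $T_{k}\smalcirc p_{V}^{\otimes k}$ of $\mathfrak{t}$ factors through $p_{V}$ and lands in $E$, so each application of $\mathfrak{t}$ to an element of $T^{Z}(E\oplus V)$ strictly decreases the number of $V$-tensorands, turning them into frozen $E$-tensorands. Hence $\mathfrak{t}$ is locally nilpotent and $e^{\mathfrak{t}}=\sum_{k\geq 0}\mathfrak{t}^{k}/k!$ is a well-defined linear operator on $T^{Z}(E\oplus V)$. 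A standard induction from the coderivation identity yields the Leibniz-type formula $\Delta^{Z}\mathfrak{t}^{n}=\sum_{k=0}^{n}\binom{n}{k}(\mathfrak{t}^{k}\otimes\mathfrak{t}^{n-k})\Delta^{Z}$, so $\Delta^{Z}e^{\mathfrak{t}}=(e^{\mathfrak{t}}\otimes e^{\mathfrak{t}})\Delta^{Z}$; that is, $e^{\mathfrak{t}}$ is itself a Zinbiel comorphism.

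The substantive step is to compute the cogenerators $p\smalcirc e^{\mathfrak{t}}|_{T^{n}(E\oplus V)}\colon T^{n}(E\oplus V)\to E\oplus V$ for every $n$. I would use a degree count: applying $\mathfrak{t}$ with a bracket of size $k$ consumes $k$ of the current $V$-factors and creates one new $E$-factor, reducing tensor degree by $k-1$, and this new $E$-factor can never be absorbed by a subsequent bracket (since $T_{k}\smalcirc p_{V}^{\otimes k}$ annihilates any tuple containing an $E$-entry). Thus a summand of $\mathfrak{t}^{j}(w)$ using brackets of sizes $k_{1},\dots,k_{j}$ lies in tensor degree $n+j-\sum k_{i}$, subject to the constraint $\sum k_{i}\leq n_{V}$, where $n_{V}$ is the number of $V$-factors in $w\in T^{n}(E\oplus V)$. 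Requiring this degree to equal $1$ forces $n_{V}=n$, $j=1$, and $k_{1}=n$, contributing exactly $T_{n}(p_{V}^{\otimes n}(w))$ to the cogenerator in degree $n\geq 2$. For $n=1$ the $j=0$ summand already sits in $T^{1}$, so the degree-$1$ cogenerator picks up the additional summand $w$ itself.

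Combining these, the cogenerators of $e^{\mathfrak{t}}$ are $\mathrm{Id}_{E\oplus V}+T_{1}\smalcirc p_{V}$ in degree $1$ and $T_{n}\smalcirc p_{V}^{\otimes n}$ in degree $n\geq 2$, which are precisely the cogenerators of $\mathrm{Id}+T$: the summand $\mathrm{Id}$ supplies $\mathrm{Id}_{E\oplus V}$ only in degree $1$, while $T=T\smalcirc p_{T(V)}$ supplies $T_{k}\smalcirc p_{V}^{\otimes k}$ in every degree $k\geq 1$. Cofreeness of $T^{Z}(E\oplus V)$ then forces $e^{\mathfrak{t}}=\mathrm{Id}+T$. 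The main obstacle is the combinatorial argument in the middle paragraph: one must carefully exclude every $j\geq 2$ from contributing to the $T^{1}$ component of $e^{\mathfrak{t}}$; once this is in hand, everything else is routine cofree-coalgebra bookkeeping.
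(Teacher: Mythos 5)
Your proposal is correct and follows essentially the same route as the paper: establish that $e^{\mathfrak{t}}$ is a well-defined comorphism (the paper simply invokes the standard fact that the exponential of a conilpotent degree-$0$ coderivation is a comorphism, which you prove via local nilpotency and the binomial formula for $\Delta^{Z}\mathfrak{t}^{n}$), then compute its restriction maps and identify them with those of $\mathrm{Id}+T$. Your degree-counting argument ruling out contributions from $\mathfrak{t}^{j}$ with $j\geq 2$ is a correct, more detailed justification of the restriction-map computation the paper states without proof.
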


\begin{proof}
The exponential of a (conilpotent) coderivation of degree $0$ is a comorphism. The restriction maps of $e^{\mathfrak{t}}$ are given by
\begin{eqnarray*}
(e^{\mathfrak t})_{1}(x+v)&=&x+v+T_{1}(v),\quad x\in E, v\in V \\
(e^{\mathfrak t})_{n}(x_{1}+v_{1},\ldots, x_{n}+v_{v})&=&T_{n}(v_{1},\ldots, v_{n}), \quad n\geq 2,
\end{eqnarray*}
for all $x_{1}+v_{1},\ldots, x_{n}+v_{n}\in E\oplus V$, and this yields the result.
\end{proof}

\begin{lem}\label{lem:Loday:bracket:formula} 
The map $p_{T(V)}Qe^{\mathfrak{t}}$ is a coderivation with respect to the comorphism $p_{T(V)}$. Its restriction to $T(V)$, $p_{T(V)}Qe^{\mathfrak{t}}_{|_{TV}}$, is defined by the restriction maps:
\begin{equation*}
p_{V}Qe^{\mathfrak{t}}(v_{1})=m_{1}(v_{1}),
\end{equation*}
\begin{equation}
    \label{eq:loday:brackets:in:V}
p_{V}Qe^{\mathfrak{t}}(v_{1}\otimes\ldots\otimes  v_{n})=m_{n}(v_{1},\ldots, v_{n})
 + \sum_{k=1}^{n-1}\Phi^{\bullet}_{\pi T(v_{1}\otimes\ldots\otimes v_{k})}( v_{k+1},\ldots,v_{n} )
\end{equation}
for all $v_{1},\ldots,v_{n}\in  V$, $n\geq 2$.
\end{lem}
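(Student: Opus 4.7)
The plan is to prove the two claims of the lemma separately: first the coderivation property is essentially a formal coalgebraic manipulation, and then the restriction-map formula requires careful bookkeeping with Zinbiel unshuffles.

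\textbf{Part 1 (coderivation property).} The map $e^{\mathfrak{t}}$ is a comorphism of $T^Z(E\oplus V)$ by Lemma \ref{lem:exponential:coderivation}, so $Qe^{\mathfrak{t}}$ is a coderivation along $e^{\mathfrak{t}}$:
\begin{equation*}
\Delta^Z Qe^{\mathfrak{t}}=\left(Qe^{\mathfrak{t}}\otimes e^{\mathfrak{t}}+e^{\mathfrak{t}}\otimes Qe^{\mathfrak{t}}\right)\Delta^Z.
\end{equation*}
I would compose both sides with the strict comorphism $p_{T(V)}\otimes p_{T(V)}$ and then use the key computation $p_{T(V)}\smalcirc e^{\mathfrak{t}}=p_{T(V)}\smalcirc(\mathrm{Id}+T)=p_{T(V)}$, which holds because $T$ takes values in $T^Z(E)$ and $p_{T(V)}$ annihilates $T^Z(E)$. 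This directly yields $\Delta^Z(p_{T(V)}Qe^{\mathfrak{t}})=(p_{T(V)}Qe^{\mathfrak{t}}\otimes p_{T(V)}+p_{T(V)}\otimes p_{T(V)}Qe^{\mathfrak{t}})\Delta^Z$, i.e.\ $p_{T(V)}Qe^{\mathfrak{t}}$ is a $p_{T(V)}$-coderivation.

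\textbf{Part 2 (restriction maps).} Since $p_V Q$ on $T^k(E\oplus V)$ agrees with $p_V\smalcirc\mathfrak{l}_k$, I would expand $e^{\mathfrak{t}}(v_1\otimes\ldots\otimes v_n)$ via the Zinbiel comorphism formula using the restriction maps given in Lemma \ref{lem:exponential:coderivation}: $(e^{\mathfrak{t}})_1(v)=v+T_1(v)$ and $(e^{\mathfrak{t}})_j(v_1,\ldots,v_j)=T_j(v_1,\ldots,v_j)\in E$ for $j\geq 2$. The $n=1$ case is immediate: $p_V Qe^{\mathfrak{t}}(v_1)=p_V\mathfrak{l}_1(v_1+T_1(v_1))=m_1(v_1)$.

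For $n\geq 2$, a generic term in $e^{\mathfrak{t}}(v_1\otimes\ldots\otimes v_n)$ is a pure tensor $y_1\otimes\ldots\otimes y_r$ where blocks of size $\geq 2$ lie entirely in $E$ (so $(y_j)_V=0$) while size-$1$ blocks give $y_j=v_{l}+T_1(v_{l})$. Applying the explicit formula for $p_V\mathfrak{l}_r$,
\begin{equation*}
p_V\mathfrak{l}_r(y_1,\ldots,y_r)=\sum_{i=1}^{r-1}\Phi^{i,r-i}((y_1)_E,\ldots,(y_i)_E;(y_{i+1})_V,\ldots,(y_r)_V)+m_r((y_1)_V,\ldots,(y_r)_V),
\end{equation*}
the $m_r$-part is nonzero only if every block has size $1$: the increasing-unshuffle constraint forces $\widetilde{Sh}(1,\ldots,1)=\{\mathrm{Id}\}$, producing the single contribution $m_n(v_1,\ldots,v_n)$. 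A $\Phi^{i,r-i}$-term requires each of the last $r-i$ positions to be a size-$1$ block (to supply a nonzero $V$-component). The increasing-unshuffle constraint on $\widetilde{Sh}$, together with $v_n$ being the final element, then forces these trailing blocks to be the singletons $v_{k+1},\ldots,v_n$ (in that order) for some $k$ with $r-i=n-k$. The first $i$ blocks form a Zinbiel decomposition of $\{v_1,\ldots,v_k\}$, contributing, via their $E$-components ($T_1$ from size-$1$ blocks and $T_j$ from size-$j$ blocks), exactly the expansion of the Zinbiel comorphism $T$ on $v_1\otimes\ldots\otimes v_k$.

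\textbf{Combining.} Because the restriction maps $\Phi^{i,n-k}$ are symmetric in their first $i$ arguments, $\Phi^\bullet_{z}$ depends only on $\pi(z)\in S(E)$, and the sum of the $E$-prefixes described above is precisely $\pi T(v_1\otimes\ldots\otimes v_k)$. Summing over $k=1,\ldots,n-1$ yields $\sum_{k=1}^{n-1}\Phi^{\bullet}_{\pi T(v_1\otimes\ldots\otimes v_k)}(v_{k+1},\ldots,v_n)$, which together with the $m_n$-contribution reproduces \eqref{eq:loday:brackets:in:V}.

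\textbf{Main obstacle.} The delicate step is the combinatorial matching between the increasing-unshuffle constraint in the comorphism expansion of $e^{\mathfrak{t}}(v_1\otimes\ldots\otimes v_n)$ and that in the comorphism expansion of $T(v_1\otimes\ldots\otimes v_k)$: one has to check that, once the trailing blocks are fixed to be $\{v_{k+1}\},\ldots,\{v_n\}$, the $\widetilde{Sh}$-constraint on the full decomposition restricts exactly to $\widetilde{Sh}$ on the first $k$ positions, with matching Koszul signs. Everything else is routine expansion and invocation of the symmetry of $\Phi$.
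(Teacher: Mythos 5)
Your proposal is correct and follows essentially the same route as the paper: the coderivation property is obtained from the identity $p_{T(V)}\smalcirc e^{\mathfrak{t}}=p_{T(V)}$ exactly as in the paper, and the restriction maps are computed by expanding $e^{\mathfrak{t}}=\mathrm{Id}+T$ as a Zinbiel comorphism and tracking which terms survive under $p_{V}Q$. The combinatorial point you flag as delicate does go through — the increasing-unshuffle condition forces the block maxima to increase, so the trailing singleton blocks are exactly $v_{k+1},\ldots,v_{n}$ and the leading blocks reproduce the expansion of $T(v_{1}\otimes\ldots\otimes v_{k})$ with matching Koszul signs — and your write-up actually supplies more of this detail than the paper's own proof, which only lists the three facts to be combined.
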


\begin{proof} The projection map $p_{T(V)}:T^{Z}(E\oplus V)\to T^{Z}(V)$ is a comorphism. Since $e^{\mathfrak{t}}$ is also a comorphism, the composition 
$p_{T(V)}Qe^{\mathfrak{t}}$ is a coderivation with respect to the comorphism 
 $p_{T(V)}\smalcirc e^{\mathfrak{t}}=p_{T(V)}\smalcirc(\mathrm{Id}+T)=p_{T(V)}$. In fact,
 \begin{eqnarray*}
\Delta^{Z}p_{T(V)}Qe^{\mathfrak{t}}&=&p_{T(V)}\otimes p_{T(V)}\left(\mathrm{Id} \otimes Q+ Q\otimes \mathrm{Id}\right)(e^{\mathfrak{t}}\otimes e^{\mathfrak{t}})\Delta^{Z}\\
&=&\left(p_{T(V)}e^{\mathfrak{t}} \otimes (p_{T(V)}Qe^{\mathfrak{t}})+ (p_{T(V)}Qe^{\mathfrak{t}})\otimes p_{T(V)}e^{\mathfrak{t}}\right)\Delta^{Z}\\
&=&\left(p_{T(V)} \otimes (p_{T(V)}Qe^{\mathfrak{t}})+ (p_{T(V)}Qe^{\mathfrak{t}})\otimes p_{T(V)}\right)\Delta^{Z}, 
\end{eqnarray*}
which means that $p_{T(V)}Qe^{\mathfrak{t}}$ is a $p_{T(V)}$-coderivation.

Let $v_{1},\ldots, v_{n}\in V$. For $n=1$, the restriction map of $p_{T(V)}Qe^{\mathfrak{t}}_{|_{TV}}$ is given by:
$$
p_{V}Qe^{\mathfrak{t}}(v_{1})=p_{V}Q(\mathrm{Id}+T)(v_{1})=m_{1}(v_{1}).
$$
For $n\geq 2$, the restriction map of $p_{T(V)}Qe^{\mathfrak{t}}_{|_{TV}}$ is given by:
\begin{eqnarray*}
p_{V}Qe^{\mathfrak{t}}(v_{1}\otimes\ldots\otimes  v_{n})&=& p_{V}Q(\mathrm{Id}+T)(v_{1}\otimes\ldots\otimes  v_{n}) \\
 &=&p_{V}Q\!\!\!\!\!\!\!\!\!\!\sum_{\substack{i_{1}+\ldots+i_{k}=n\\ \sigma\in\widetilde{Sh}(i_{1},\ldots,i_{k})}} \!\!\!\!\!\!\!\!\!\!\epsilon(\sigma)(\mathrm{Id} + T)_{i_{1}}( v_{\sigma(1)}, \ldots , v_{\sigma(i_{1})} )\otimes \ldots \\
&&\quad\quad \ldots\otimes (\mathrm{Id} + T)_{i_{k}}(v_{\sigma(i_{1}+\ldots +i_{k-1}+1)}, \ldots ,v_{\sigma(n)} ).
\end{eqnarray*}

Now Equation (\ref{eq:loday:brackets:in:V}) follows if we keep in mind three things: 

(i) The restriction maps of the identity are given by: $\mathrm{Id}_{1}=\mathrm{Id}_{E\oplus V}$ and $\mathrm{Id}_{k}=0$, for $k>1$. 

(ii)
$\sigma\in\widetilde{Sh}(i_{1},\ldots,i_{k})$ means that
$\sigma(i_{1})<\sigma(i_{1}+i_{2})<\ldots <\sigma(i_{1}+\ldots+i_{n-1})<\sigma(i_{1}+\ldots+i_{n})=n $. 

(iii) By (\ref{eq:loday:brackets:sum}) we have
\begin{eqnarray*}
&&p_{V}Q(v_{1}\otimes \ldots\otimes v_{n})=m_{n}(v_{1},\ldots, v_{n}),\\
&& p_{V}Q(x_{1}\otimes\ldots\otimes x_{k}\otimes v_{1}\otimes\ldots \otimes v_{p})=\Phi^{\bullet,\bullet}(x_{1},\ldots, x_{k}; v_{1},\ldots, v_{p}), \quad x_{1},\ldots,x_{k}\in E,\\
&& p_{V}Q=0 \mbox{ in all the other cases.} 
\end{eqnarray*}

Therefore,
\begin{eqnarray*}
p_{V}Qe^{\mathfrak{t}}(v_{1}\otimes\ldots\otimes v_{n})= m_{n}(v_{1},\ldots, v_{n})
  + \sum_{k=1}^{n-1}\Phi^{\bullet}_{\pi T(v_{1}\otimes\ldots\otimes v_{k})}( v_{k+1},\ldots,v_{n} ).
\end{eqnarray*}
\end{proof}

Next proposition gives an explicit description of non-abelian embedding tensors.

\begin{prop} \label{prop:embedding:tensor:explicitly}
Equation (\ref{def:embedding:Maurer:Cartan:element}) is equivalent to
\begin{equation*}
 l_1\smalcirc T_1= T_1\smalcirc m_1
\end{equation*}
and \begin{eqnarray*}
\lefteqn{l_{\bullet}\left(T(v)\right)=T_{\bullet}\left(M^{Z}_{V}(v)\right)
 + \sum_{k=2}^{n}\sum_{i=0}^{k-2}
\sum_{\substack{j=i+1\\ \sigma\in Sh(i,k-i-1)}}^{k-1} \epsilon(\sigma)(-1)^{|v_{{\sigma(1)}}|+\ldots+ |v_{{\sigma(i)}}|}}\\
&& T_{\bullet}\left(v_{{\sigma(1)}}, \ldots, v_{{\sigma(i)}},
\Phi^{\bullet}_{\pi T(v_{{\sigma(i+1)}}\otimes \ldots\otimes v_{{\sigma(j)}} )} \left(v_{{\sigma(j+1)}}, \ldots,v_{{\sigma(k-1)}},v_{{k}} \right), v_{{k+1}}, \ldots, v_{{n}}\right),
\end{eqnarray*}
for all $v=v_{1}\otimes\ldots\otimes v_{n}\in T^{n}(V)$, $n\geq 2$.
\end{prop}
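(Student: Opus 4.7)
The plan is to unpack the Maurer-Cartan equation (\ref{def:embedding:Maurer:Cartan:element}) using the standard identity that the exponentiated adjoint action coincides with conjugation in the graded Lie algebra of coderivations. Since $|\mathfrak{t}| = 0$, a direct expansion gives
\[
\sum_{k \geq 1} \frac{1}{k!} \brr{\ldots \brr{Q, \mathfrak{t}}_c, \ldots, \mathfrak{t}}_c \;=\; e^{-\mathfrak{t}} \smalcirc Q \smalcirc e^{\mathfrak{t}} - Q.
\]
Because $Q$ restricted to $\bar T(V) \subset \bar T(E \oplus V)$ has image in $\bar T(V)$ (the brackets $\mathfrak{l}_k$ reduce to $m_k$ on pure-$V$ tuples), $\mathcal{P}(Q) = 0$, so (\ref{def:embedding:Maurer:Cartan:element}) is equivalent to $\mathcal{P}(e^{-\mathfrak{t}} \smalcirc Q \smalcirc e^{\mathfrak{t}}) = 0$. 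By Lemma \ref{lem:exponential:coderivation}, $e^{\mathfrak{t}} = \mathrm{Id} + T$ as comorphisms of $T^Z(E\oplus V)$, and a short check (using that $T\smalcirc T$ vanishes since the restriction maps of $T$ factor through $p_V$ and land in $E$) gives $e^{-\mathfrak{t}} = \mathrm{Id} - T$.

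Writing $Y := Q\smalcirc e^{\mathfrak{t}}(v) \in T^Z(E\oplus V)$ for $v = v_1 \otimes \ldots \otimes v_n$, and decomposing $Y = \sum_m Y_m$ with $Y_m \in T^m(E\oplus V)$, the projection-to-$E$ of the restriction map of $e^{-\mathfrak{t}}\smalcirc Q \smalcirc e^{\mathfrak{t}}$ at $v$ becomes
\[
p_E(Y_1) \;-\; \sum_{m \geq 1} T_m\bigl(p_V^{\otimes m}(Y_m)\bigr).
\]
The first summand equals $l_{\bullet}(T(v))$: for $Q$ to produce $E$ at degree one, the Zinbiel bracket $\mathfrak{l}_k$ must receive only $E$-inputs (hence reduce to $l_k$), forcing every block in the comorphism expansion of $e^{\mathfrak{t}}(v)$ to be a $T$-block; summing over all such decompositions reconstructs $p \smalcirc M_E^Z \smalcirc T(v)$. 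The second summand equals $p \smalcirc T$ applied to the purely-$V$ part of $Y$, which splits into: (a) no $T$-block appears in the chosen decomposition of $e^{\mathfrak{t}}(v)$, giving $M_V^Z(v)$ and contributing $T_{\bullet}(M_V^Z(v))$; and (b) some $T$-blocks appear, which must then all be absorbed as the initial $E$-arguments of a single $\mathfrak{l}_{\bullet}$-bracket (producing a $\Phi^{\bullet,\bullet}$-term in $V$) while all blocks outside the bracket remain single $v_i$'s. Matching case (b) with the Zinbiel coderivation formula for $Q$ and the Zinbiel comorphism expansion of $T$ on the absorbed indices reproduces the stated correction sum: the outer shuffle $\sigma \in Sh(i, k-i-1)$ separates the outside-left $v_{\sigma(1)}, \ldots, v_{\sigma(i)}$ from the bracket arguments, the cutoff $j$ separates the $\pi T(\ldots)$-subscript indices $\sigma(i+1), \ldots, \sigma(j)$ from the $V$-arguments $\sigma(j+1), \ldots, \sigma(k-1), k$ of $\Phi^{\bullet}$, and the outer-right positions $v_{k+1}, \ldots, v_n$ are untouched; the case $n=1$ is immediate from $Q(v_1 + T_1(v_1)) = m_1(v_1) + l_1(T_1(v_1))$, yielding $l_1 T_1 = T_1 m_1$.

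The main obstacle lies in the combinatorial bookkeeping of case (b): one must verify that the Zinbiel shuffle data produced by combining the comorphism expansion of $e^{\mathfrak{t}}(v)$ with the Zinbiel coderivation structure of $Q$ enumerate exactly the quadruples $(k, i, j, \sigma)$ appearing in the proposition's formula, with Koszul signs $(-1)^{|v_{\sigma(1)}| + \ldots + |v_{\sigma(i)}|}$ matching those arising from moving $\mathfrak{t}$ past the first $i$ left-outside blocks. Once this dictionary is established, equating the two contributions in the displayed formula for the restriction map produces the stated system of equations.
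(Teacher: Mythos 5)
Your proposal is correct and follows essentially the same route as the paper: both reduce the Maurer--Cartan equation to $\mathcal{P}(e^{-\mathfrak{t}}\smalcirc Q\smalcirc e^{\mathfrak{t}})=0$, observe that only the linear term in $\mathfrak{t}$ survives the $E$-corestriction (your $e^{-\mathfrak{t}}=\mathrm{Id}-T$ is the same fact as the paper's $p_{E}\smalcirc\mathfrak{t}^{k}=0$ for $k\geq 2$), and then identify $p_{E}(Y_1)=l_{\bullet}(T(v))$ and $\sum_m T_m(p_V^{\otimes m}(Y_m))=T_{\bullet}\smalcirc p_{T(V)}\smalcirc Q\smalcirc e^{\mathfrak{t}}(v)$. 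The combinatorial bookkeeping you defer in case (b) is exactly the content of Lemma \ref{lem:Loday:bracket:formula}, which the paper's proof simply cites at that point.
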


\begin{proof} First notice that 
$$p_{E}\mathfrak{t}^{k}=0, \quad k\geq 2.$$
Thus
$$
p_{E}(e^{\brr{\, , \mathfrak{t}}}(Q))=p_{E}(Qe^{\mathfrak{t}} - \mathfrak{t}Qe^{\mathfrak{t}})
$$
and Equation (\ref{def:embedding:Maurer:Cartan:element}) is equivalent to
$$
p_{E}Qe^{\mathfrak{t}}_{|_{T(V)}} =p_{E} \mathfrak{t}Qe^{\mathfrak{t}}_{|_{T(V)}}
$$
or to
$$
l_{\bullet}p_{T(E)}e^{\mathfrak{t}}_{|_{T(V)}}=T_{\bullet}p_{T(V)}Qe^{\mathfrak{t}}_{|_{T(V)}}.
$$

\

Let $v_{1},\ldots, v_{n}\in V$. 
By Lemma \ref{lem:exponential:coderivation}, we have
$$p_{E}Qe^{\mathfrak{t}}(v_{1}\otimes\ldots\otimes v_{n})=l_{\bullet}p_{T(E)}e^{\mathfrak{t}}(v_{1}\otimes\ldots\otimes v_{n})=l_{\bullet}T(v_{1}\otimes\ldots\otimes v_{n}).$$

On the other hand, by Lemma \ref{lem:Loday:bracket:formula} we know that 
$p_{T(V)}Qe^{\mathfrak{t}}_{|_{T(V)}}$ is the $p_{T(V)}$-coderivation defined by the linear maps (\ref{eq:loday:brackets:in:V}). This yields
\begin{eqnarray*}
\lefteqn{p_{T(V)}Qe^{\mathfrak{t}}(v_{1}\otimes\ldots\otimes v_{n})=M_{V}^{Z}(v_{1}\otimes\ldots\otimes v_{n})} \\
&& + \sum_{k=2}^{n}\sum_{i=0}^{k-2}\!\!\!\!\!\!\!\!\!\!
\sum_{\substack{j=i+1\\ \sigma\in Sh(i,k-i-1)}}^{k-1}\!\!\!\!\!\!\!\!\!\! \epsilon(\sigma)(-1)^{(|v_{{\sigma(1)}}|+\ldots+ |v_{{\sigma(i)}}|)}
v_{{\sigma(1)}}\otimes \ldots\otimes v_{{\sigma(i)}}\otimes \\
&&\qquad\otimes \Phi^{\bullet}_{\pi T(v_{{\sigma(i+1)}}\otimes \ldots\otimes v_{{\sigma(j)}} )} \left(v_{{\sigma(j+1)}}, \ldots,v_{{\sigma(k-1)}},v_{{k}} \right)
\otimes v_{{k+1}}\otimes \ldots\otimes v_{{n}}.
\end{eqnarray*}
\end{proof}

Non-abelian embedding tensors define a Loday $\infty$-structure on $V$. 

\begin{prop}\label{prop:embedding:tensor:morphism}
A non-abelian embedding tensor $T$ with respect to a coherent Lie $\infty$-algebra action $\Phi:(E,M_E) \to (\Coder (\bar S(V))[1], \partial_{M_V}, \brr{\cdot, \cdot })$ induces on $V$ a Loday $\infty$-algebra structure $Q_{V}^{T}$, 
given by the brackets 
\begin{align*}
q_1(v_{1})&=m_1(v_{1})\\
 q_{n}(v_{1},\ldots,v_{n}) &= \sum_{k=1}^{n-1}\Phi^{\bullet}_{\pi T(v_{1}\otimes\ldots\otimes v_{k})}\left(v_{k+1},\ldots, v_{n}\right) + m_n(v_{1},\ldots,v_{n}), 
\end{align*}
for all $v_{1},\ldots,v_{n} \in V$, $n\geq 2$.

Moreover, $T:(V,Q^{T}_{V})\to (E, M_{E}^{Z})$ is a Loday $\infty$-morphism. 
\end{prop}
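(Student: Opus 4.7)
The plan is to derive both assertions from the Maurer-Cartan equation (\ref{def:embedding:Maurer:Cartan:element}) by introducing the conjugate coderivation $\tilde Q := e^{-\mathfrak{t}} Q e^{\mathfrak{t}}$ of $T^Z(E \oplus V)$. Since $e^{\pm \mathfrak{t}}$ are mutually inverse comorphisms of $T^Z(E \oplus V)$ (being exponentials of the degree-$0$ coderivation $\mathfrak{t}$), $\tilde Q$ is a coderivation conjugate to $Q$, so $\tilde Q^2 = 0$ automatically. Equation (\ref{def:embedding:Maurer:Cartan:element}) is precisely $\mathcal{P}(\tilde Q - Q) = 0$; combined with $\mathcal{P}(Q) = 0$ (the cogenerator $\mathfrak{l}_n$ of $Q$ sends a pure $V$-tuple to $m_n(v_1,\dots,v_n) \in V$), this yields $\mathcal{P}(\tilde Q) = 0$, i.e., the cogenerator of $\tilde Q$ is $V$-valued on $V$-only inputs.

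I first define $Q_V^T$ as the coderivation of $T^Z(V)$ with cogenerator $q_\bullet$ as in the statement. The vanishing of the $E$-component of the cogenerator of $\tilde Q$ on $T(V)$ forces $\tilde Q(T^Z(V)) \subseteq T^Z(V)$ via the Zinbiel coderivation formula, producing a coderivation $\hat Q := \tilde Q|_{T^Z(V)}$ of $T^Z(V)$. To identify $\hat Q = Q_V^T$ it suffices to match cogenerators. The key elementary fact is that $p_V \smalcirc \mathfrak{t} = 0$ (since the cogenerator $T_\bullet$ of $\mathfrak{t}$ is $E$-valued), whence $p_V \smalcirc \mathfrak{t}^k = 0$ for all $k \geq 1$ and $p_V \smalcirc e^{-\mathfrak{t}} = p_V$. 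Combining this with Lemma~\ref{lem:Loday:bracket:formula},
\begin{equation*}
p_V \hat Q|_{T(V)} = p_V \tilde Q|_{T(V)} = p_V Q e^{\mathfrak{t}}|_{T(V)} = q_\bullet.
\end{equation*}
Hence $\hat Q = Q_V^T$, and $(Q_V^T)^2 = \hat Q^2 = 0$, so $Q_V^T$ is a codifferential and defines the claimed Loday $\infty$-structure on $V$.

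For the morphism statement, both $T \smalcirc Q_V^T$ and $M_E^Z \smalcirc T$ are $T$-coderivations $T^Z(V) \to T^Z(E)$, hence determined by their cogenerators $p_E \smalcirc (-)$. Applying $p_{T(E)}$ to the intertwining identity $Q e^{\mathfrak{t}} = e^{\mathfrak{t}} \tilde Q$ at $v \in T^Z(V)$: on the left, the only component of $e^{\mathfrak{t}}(v)$ lying in $T^Z(E)$ is the comorphism image $T(v)$, and $Q$ restricts on $T^Z(E)$ to $M_E^Z$, so $p_{T(E)} Q e^{\mathfrak{t}}(v) = M_E^Z T(v)$; on the right, $\hat Q(v) = Q_V^T(v) \in T^Z(V)$, hence $p_{T(E)} e^{\mathfrak{t}}\tilde Q(v) = T\,Q_V^T(v)$. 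A further projection by $p_E$ yields the equality of cogenerators, which is also precisely the content of Proposition~\ref{prop:embedding:tensor:explicitly}. The main technical hurdle is the restriction step $\tilde Q(T^Z(V)) \subseteq T^Z(V)$ and the identification of its cogenerator; both rest on the elementary identity $p_V \smalcirc \mathfrak{t} = 0$ together with the Maurer-Cartan condition, all the rest being formal manipulation with comorphisms and cogenerators.
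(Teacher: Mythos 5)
Your proposal is correct and follows essentially the same route as the paper: both work with the transformed codifferential $e^{\brr{\,\cdot\,,\mathfrak t}}(Q)=e^{-\mathfrak t}Qe^{\mathfrak t}$, use the Maurer--Cartan condition $\mathcal P(e^{\brr{\,\cdot\,,\mathfrak t}}(Q))=0$ to restrict it to a codifferential of $T^{Z}(V)$, identify the brackets via $p_{V}\smalcirc\mathfrak t=0$ and Lemma~\ref{lem:Loday:bracket:formula}, and obtain $M_{E}^{Z}\smalcirc T=T\smalcirc Q_{V}^{T}$ as in Proposition~\ref{prop:embedding:tensor:explicitly}. The only difference is that you spell out a few steps the paper leaves implicit (the conjugation form of $e^{\brr{\,\cdot\,,\mathfrak t}}$, the invariance $\tilde Q(T^{Z}(V))\subseteq T^{Z}(V)$, and a direct derivation of the intertwining identity), which is a matter of exposition rather than of method.
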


\begin{proof} Let $Q$ be the codifferential (of degree $+1$) of the non-abelian hemisemidirect direct product $E\ltimes^{\Phi} V$. Since
$$
e^{\brr{\, , \mathfrak t}}(Q)\smalcirc e^{\brr{\, , \mathfrak t}}(Q)=e^{\brr{\, , \mathfrak t}}(Q^2)=0,
$$
also $e^{\brr{\, , \mathfrak t}}(Q)$ 
is a coderivation of degree $+1$ of $T^{Z}(E\oplus V)$ and, consequently, 
 it defines Loday $\infty$-structure on $E\oplus V$.

By definition $T$ is a non-abelian embedding tensor if 
$\ds 
\mathcal{P}\left(e^{\brr{\, , \mathfrak t}}(Q) \right)=0,
$
therefore
\begin{equation*}
p_{V}\left(e^{\brr{\, , \mathfrak t}}(Q) \right)_{|_{{T(V)}}}
\end{equation*}
 defines a Loday $\infty$-structure on $V$. 
Noting that $\ds p_{V}\smalcirc \mathfrak{t}=0$, 
we have $p_{V}\left(e^{\brr{\, , \mathfrak t}}(Q) \right)=p_{V}\smalcirc Q\smalcirc e^{\mathfrak{t}}$ and the Lemma \ref{lem:Loday:bracket:formula} gives the Loday $\infty$-brackets
\begin{eqnarray*}
q_{1}(v_{1})&=&m_{1}(v_{1})\\
q_{n}({v_{1},\ldots, v_{n}})&=& m_{n}(v_{1},\ldots, v_{n}) + \sum_{k=1}^{n-1}\Phi^{\bullet}_{\pi T(v_{1}\otimes\ldots\otimes v_{k})} (v_{k+1},\ldots, v_{n}),
\end{eqnarray*}
for all $v_{1},\ldots,v_{n}\in V$, $n\geq 2$.

Finally, let $Q^{T}_{V}=p_{T(V)}Qe^{\mathfrak{t}}_{|_{T(V)}}$ be the codifferential of $T^{Z}(V)$ defined by these brackets.  Proposition \ref{prop:embedding:tensor:explicitly} guarantees that
 $T$ is a non-abelian embedding tensor if and only if
$M_{E}^{Z}\smalcirc T=T\smalcirc Q^{T}_{V}$. 
This means that $T$ is a Loday $\infty$-morphism.
\end{proof}

Following the nomenclature used in \cite{TS2023}:
\begin{defn} The Loday $\infty$-structure on $V$ given by a non-abelian embedding tensor with respect to a coherent Lie $\infty$-action is called the {\textbf{descendent Loday $\infty$-algebra}}.
\end{defn}

\begin{cor} Let $(E,M_{E})$ and $(V,M_{V})$ be Lie $\infty$-algebras and
 $T$ a symmetric non-abelian embedding tensor with respect to a coherent action $\Phi$ such that $\pi(\mathrm{Im} \;T)\subset \ker \Phi$, then $ T$ defines a Lie $\infty$-morphism. 
\end{cor}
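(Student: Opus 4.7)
The plan is to combine Proposition \ref{prop:embedding:tensor:morphism} with the hypothesis $\pi(\mathrm{Im}\,T)\subset \ker\Phi$ to show that the descendent Loday $\infty$-structure on $V$ collapses onto the original Lie $\infty$-structure, and then to invoke Remark \ref{rem:symmetric:Loday:are:Lie} to upgrade the resulting symmetric Loday $\infty$-morphism to a Lie $\infty$-morphism.

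First, recall from Proposition \ref{prop:embedding:tensor:morphism} that any non-abelian embedding tensor $T$ produces a descendent Loday $\infty$-algebra $(V,Q^T_V)$ with brackets
\begin{equation*}
q_1(v_1)=m_1(v_1),\quad q_n(v_1,\ldots,v_n)=m_n(v_1,\ldots,v_n)+\sum_{k=1}^{n-1}\Phi^{\bullet}_{\pi T(v_1\otimes\ldots\otimes v_k)}(v_{k+1},\ldots,v_n),
\end{equation*}
and that $T:(V,Q^T_V)\to (E,M^Z_E)$ is a Loday $\infty$-morphism. The first observation to use is that the hypothesis $\pi(\mathrm{Im}\,T)\subset\ker\Phi$ forces every coderivation $\Phi_{\pi T(v_1\otimes\ldots\otimes v_k)}$ to vanish, so that all the $\Phi$-correction terms disappear from the formula above. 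Consequently $q_n=m_n$ for every $n\geq 1$, which means $Q^T_V=M^Z_V$: the descendent Loday $\infty$-algebra is literally the Loday $\infty$-algebra associated with the original Lie $\infty$-structure on $V$ (via Example \ref{ex:Lie:is:Loday}).

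Thus $T:(V,M^Z_V)\to (E,M^Z_E)$ is a Loday $\infty$-morphism between the Loday $\infty$-algebras induced by $M_V$ and $M_E$. Since $T$ is assumed symmetric as a Zinbiel comorphism, Remark \ref{rem:symmetric:Loday:are:Lie} provides the natural bijection between symmetric Zinbiel comorphisms $T^Z(V)\to T^Z(E)$ and coshuffle comorphisms $T^S:\bar S(V)\to \bar S(E)$, together with the fact that such a symmetric Loday $\infty$-morphism is equivalent to a Lie $\infty$-morphism between $(V,M_V)$ and $(E,M_E)$. Applying this to our $T$ gives the conclusion.

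The argument is essentially a direct substitution; the only subtlety is making sure that the vanishing $\pi(\mathrm{Im}\,T)\subset\ker\Phi$ is strong enough to kill every term of the form $\Phi^{\bullet}_{\pi T(v_1\otimes\ldots\otimes v_k)}$ for all $k\geq 1$ (and not merely for $k=1$). This is automatic because $T$ is a coalgebra morphism into $T^Z(E)$, so $T(v_1\otimes\ldots\otimes v_k)$ lies in $\bar T(E)$, and symmetrizing with $\pi$ lands inside $\bar S(E)$, where the extended $\Phi$ is defined; the kernel hypothesis applies uniformly.
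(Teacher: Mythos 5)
Your proposal is correct and follows essentially the same route as the paper: use the hypothesis $\pi(\mathrm{Im}\,T)\subset\ker\Phi$ to kill the $\Phi$-correction terms in the descendent brackets so that $Q^T_V=M^Z_V$ and Proposition \ref{prop:embedding:tensor:morphism} yields $M^Z_E\smalcirc T=T\smalcirc M^Z_V$, then invoke the symmetry of $T$ and Remark \ref{rem:symmetric:Loday:are:Lie} to pass to a Lie $\infty$-morphism $T^S:\bar S(V)\to\bar S(E)$. Your write-up is in fact slightly more explicit than the paper's, which states the identity $M^Z_E\smalcirc T=T\smalcirc M^Z_V$ directly without spelling out why the descendent structure collapses to $M^Z_V$.
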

\begin{proof}
Let $T$ be a non-abelian embedding tensor with respect to an coherent action $\Phi$ such that $\pi(\mathrm{Im} \;T)\subset \ker \Phi$. Then, by Proposition \ref{prop:embedding:tensor:morphism},
$$
M_{E}^{Z}\smalcirc T=T\smalcirc M_{V}^{Z}.
$$

The (Zinbiel) comorphism $T\equiv\set{T_k: T^k(V)\to E}_{k\in\Nn}$ is symmetric if $T_{\bullet}=T_{\bullet}\smalcirc \pi$ (see Remark \ref{rem:symmetric:Loday:are:Lie}). 
In this case, the same family of linear maps defines a comorphism $T^S :\bar S(V)\to \bar S(E)$ that is a Lie $\infty$-morphism because $T$ is a Loday $\infty$-morphism.
\end{proof}

\begin{cor}
    Let $(E,M_{E})$ and $(V,M_{V})$ be Lie $\infty$-algebras and
 $T$ a  non-abelian embedding tensor with respect to a coherent action $\Phi$.
 Then $T$ induces a Lie $\infty$-morphism 
$ \bar T:\mathrm{Prim}(V)\to E$. 
\end{cor}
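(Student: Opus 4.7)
The plan is to apply the Loday$\infty$-to-Lie$\infty$ functor described at the end of Section \ref{sec:1} to the Loday $\infty$-morphism produced by Proposition \ref{prop:embedding:tensor:morphism}, and then compose with a canonical Lie $\infty$-morphism $\pi_E:\mathrm{Prim}(E)\to E$ induced by symmetrization.

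First, by Proposition \ref{prop:embedding:tensor:morphism}, $T$ is a Loday $\infty$-morphism $T:(V,Q_V^T)\to(E,M_E^Z)$. The underlying Zinbiel comorphism $T:T^Z(V)\to T^Z(E)$ is automatically a coshuffle comorphism and hence preserves primitive elements. Via the coalgebra isomorphisms $T^c(V)\cong\bar S(\mathrm{Prim}(V))$ and $T^c(E)\cong\bar S(\mathrm{Prim}(E))$ recalled in Section \ref{sec:1}, the functor in the cited remark promotes $T|_{\mathrm{Prim}(V)}$ to a Lie $\infty$-morphism $\tilde T:\mathrm{Prim}(V)\to\mathrm{Prim}(E)$ between the Lie $\infty$-structures induced on the primitive subcoalgebras by $Q_V^T$ and $M_E^Z$ respectively.

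To land in $E$ rather than $\mathrm{Prim}(E)$, I construct a Lie $\infty$-morphism $\pi_E:\mathrm{Prim}(E)\to E$ out of the symmetrization $\pi:\bar T(E)\to\bar S(E)$. A short check shows that $\pi$ is a morphism of coshuffle coalgebras; moreover, as already recorded in Section \ref{sec:1}, since $M_E^Z$ is defined by the same symmetric family $\set{l_k}_{k\in\Nn}$ that defines $M_E$, one has $\pi\smalcirc M_E^Z=M_E\smalcirc\pi$. Transporting $\pi$ through the isomorphism $T^c(E)\cong\bar S(\mathrm{Prim}(E))$ yields a codifferential-preserving coalgebra morphism $\bar S(\mathrm{Prim}(E))\to\bar S(E)$, that is, precisely the Lie $\infty$-morphism $\pi_E$. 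Setting $\bar T:=\pi_E\smalcirc\tilde T$, we obtain the desired Lie $\infty$-morphism $\mathrm{Prim}(V)\to E$ as a composition of Lie $\infty$-morphisms.

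The delicate point is the construction of $\pi_E$, but it reduces to repackaging two facts already established: $\pi\smalcirc M_E^Z=M_E\smalcirc\pi$ and the coalgebra isomorphism $T^c(E)\cong\bar S(\mathrm{Prim}(E))$. Everything else is the functoriality of $\mathrm{Prim}$ and the fact that Lie $\infty$-morphisms compose.
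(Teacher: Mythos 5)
Your proposal is correct and follows essentially the same route as the paper: both pass through the coalgebra isomorphism $\bar T^c(\cdot)\cong\bar S(\mathrm{Prim}(\cdot))$ to turn the Loday $\infty$-morphism $T:(V,Q_V^T)\to(E,M_E^Z)$ from Proposition \ref{prop:embedding:tensor:morphism} into a Lie $\infty$-morphism on primitives, and then post-compose with the symmetrization $\pi$ using $\pi\smalcirc M_E^Z=M_E\smalcirc\pi$. Your $\pi_E\smalcirc\tilde T$ is exactly the paper's $\pi\smalcirc T\smalcirc\Psi^{-1}$, just factored explicitly through $\mathrm{Prim}(E)$.
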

\begin{proof}
    The non-abelian embedding tensor  is a Loday $\infty$-morphism 
    $$T:(V,Q_V^T)\to (E,M_E^Z),$$ 
    therefore it is also  a comorphism between coshuffle coalgebras and preserves the codifferentials.

    Recall that, for any graded vector space $V$, there is a (coalgebra) isomorphism $\Psi: \bar T^c(V)\to \bar S(\mathrm{Prim}(V))$ (see \cite{STZ21}). 
    
    In our case, $(V,Q^T_V)$  and $(E, M_E^Z)$ are  Loday $\infty$-algebras so  we can use the isomorphism and the codifferentials $Q_V^T$ and $M_E^Z$ to turn $\mathrm{Prim}(V)$ and $\mathrm{Prim}(E)$ into Lie $\infty$-algebras: $$ Q_V^{T,\, Lie}:=\Psi\smalcirc Q_V^T\smalcirc\Psi^{-1},$$
    $$ M_E^{Lie}:=\Psi\smalcirc M_E^Z\smalcirc\Psi^{-1}.$$
   
We have the following diagram:

\begin{center}
    \xymatrix{
(V, Q_V^T)\subset T^c(V) \ar[d]^{\Psi} \ar[r]^T & (E,M_E^Z)\subset T^c(E)\ar[r]^{\pi}\ar[d]^{\Psi} & (E,M_E)\subset \bar S(E)\\
(\mathrm{Prim}(V),Q_V^{T,\, Lie}) \ar@/^/[u]^{\Psi^{-1}} \ar[r]^{T^{Lie}}  & (\mathrm{Prim}(E),M_E^{Lie})\ar@/^/[u]^{\Psi^{-1}}
}    
\end{center}
      
The symmetrization map $\pi:\bar T^c(E)\to \bar S(E)$ is a comorphism such that
$$\pi\smalcirc M_E^Z=M_E\smalcirc \pi.$$
Therefore, the map $\bar T=\pi\smalcirc T\smalcirc\Psi^{-1}: \bar S(\mathrm{Prim}(V))\to \bar S(E)$ is a comorphism that preserves the codifferentials, which means it is a Lie $\infty$-algebra morphism.  
\end{proof}

\begin{ex}
In the case where $V$ is simply a vector space (i.e. $M_{V}=0$) and $\rho:(E,M_{E})\to \mathfrak{gl}(V)$ is a (non-curved) representation of the Lie $\infty$-algebra $(E,M_{E})$,  a non-abelian embedding tensor of $\rho$ is a homotopy embedding tensor of the representation $\rho$, that was defined in \cite{STZ21}.
\end{ex}

\begin{ex}\label{ex:embedding:tensor:representation}
Let $\Phi:(E,M_E=\set{l_{k}}_{k\in \Nn}) \to (\End (V), \mathrm{d})$ be a Lie $\infty$-representation. A (Zinbiel) comorphism
 $T\equiv\set{T_{k}:T^{k}(V)\to E}_{k\in\Nn}$ is a non-abelian embedding tensor if and only if
 \begin{eqnarray*}
 l_{1}\smalcirc T_{1}&=&T_{1}\smalcirc \mathrm{d}\\
 l_{\bullet}T(v_{1}\otimes\ldots\otimes v_{n})
 &=& \sum_{i=1}^n  (-1)^{|v_1|+\ldots+|v_{i-1}|}T_n(v_1\ldots, \mathrm{d}v_i, \ldots,v_n)\\
 &&+
 \sum_{k=2}^{n}\!\!\!\!\!\sum_{\substack{i=0\\ \sigma\in Sh(i,k-i-1)}}^{k-2}\!\!\!\!\!\!\!\!\!\!\epsilon(\sigma)(-1)^{(|v_{{\sigma(1)}}|+\ldots+ |v_{{\sigma(i)}}|)} \\
 &&\quad T_{\bullet}\left(v_{{\sigma(1)}}, \ldots, v_{{\sigma(i)}},
 \Phi_{\pi T(v_{{\sigma(i+1)}}\otimes \ldots\otimes v_{{\sigma(k-1)}} )} v_{{k}}, 
 v_{{k+1}}, \ldots, v_{{n}}\right),
 \end{eqnarray*} 
 for all $v_{1},\ldots, v_{n}\in V$, $n\geq 2$.

The Loday $\infty$-structure on $V$ defined by the non-abelian embedding tensor $T$ is given by the brackets:
 \begin{eqnarray*}
q_{1}(v_{1})&=&\mathrm{d}(v_{1})\\
q_{n}({v_{1},\ldots, v_{n}})&=& \Phi_{\pi T(v_{1}\otimes\ldots\otimes v_{n-1})} v_{n},
\end{eqnarray*}
for all $v_{1},\ldots, v_{n}\in V$, $n\geq 2$.

In particular, a strict morphism $T=T_{1}:V\to E$ is an non-abelian embedding tensor if and only if
 \begin{align*}
 &l_{1}\smalcirc T=T\smalcirc \mathrm{d}\\
 &l_{n}\left(T(v_{1}),\ldots, T(v_{n})\right)=T\left(\Phi^{n-1,1}\left(T(v_{1}),\ldots, T(v_{n-1});v_{n}\right)\right), 
 \end{align*}
 and the  descendent Loday $\infty$-structure on $V$ induced by $T$ is 
 \begin{eqnarray*}
q_{1}(v_{1})&=&\mathrm{d}(v_{1})\\
q_{n}({v_{1},\ldots, v_{n}})&=& \Phi^{n-1,1}(T(v_{1}),\ldots, T(v_{n-1}); v_{n}),
\end{eqnarray*}
for all $v_{1},\ldots, v_{n}\in V$, $n\geq 2$.
\end{ex}

\begin{ex}
Let $(E, \brr{\cdot , \cdot }_E)$ and $(V, \brr{\cdot , \cdot }_V)$ be Lie algebras and $\Phi:E\to \Coder\bar S(V)$ the coherent action induced by a coherent Lie representation $\rho:E\to \Der(V)$:
 $$
\brr{\rho_{x}v,w}_{V}=0, \quad v,w\in V,\; x\in E.
$$ 

 A non-abelian embedding tensor $T\equiv\set{T_{k}: T^{k} (V)\to E}_{k\in\Nn}$ must satisfy

{ \small{
\begin{eqnarray*}
\lefteqn{ \sum_{i=1}^{n-1}\sum_{\sigma\in Sh(i,n-1-i)}\epsilon(\sigma)\brr{T_{\bullet}(v_{\sigma(1)}, \ldots, v_{\sigma(i)}),T_{\bullet}(v_{\sigma(i+1)}, \ldots, v_{\sigma(n-1)},v_{n})}_{E}=}\\
&=&\sum_{k=2}^{n}\sum_{i=0}^{k-2}\sum_{\sigma\in Sh(i,k-1-i)}\epsilon(\sigma) (-1)^{|v_{\sigma(1)}|+\ldots+|v_{\sigma(i)}|}\\
&&T_{\bullet}\left(v_{\sigma(1)}, \ldots, v_{\sigma(i)},\rho_{T_{\bullet}(v_{\sigma(i+1)}, \ldots, v_{\sigma(k-1)})} v_{k}, v_{k+1},\ldots, v_{n} \right)\\
&& + \sum_{j=2}^{n}\sum_{{i=1}}^{j-1} (-1)^{|v_{i}|(\sum_{k=i}^{j-1}|v_{k}|)+\sum_{k=1}^{j-1}|v_{k}|}T_{n-1}(v_{1},\ldots , \widehat{v_{i}},\ldots,v_{j-1},\brr{v_{i},v_{j}}_{V}, v_{j+1},\ldots, v_{n}).
 \end{eqnarray*}
 }}
 
%
%
 
If $T:V\to E$ is a strict morphism, we recover the definition of a non-abelian embedding tensor with respect to a coherent action in the sense of \cite{TS2023}, because for $n=2$ the last equation reduces to
 $$
 \brr{T(v_{1}), T(v_{2})}_{E}=T\left( \rho_{T(v_{1})} v_{2}\right) + T\brr{v_{1},v_{2}}_{V},\quad v_{1},v_{2}\in V.
 $$

\end{ex}

\subsection{Non-abelian embedding tensors of the adjoint representation}
Example \ref{ex:embedding:tensor:representation} can be adapted to the adjoint representation of a Lie $\infty$-algebra $(E,M_{E}\equiv\set{l_{k}}_{k\in\Nn})$.

For the adjoint representation, $T\equiv\set{T_{k}:T(E)\to E}_{k\in\Nn}$ is an non-abelian embedding tensor if and only if
 \begin{eqnarray*}
 &&l_{1}\smalcirc T_{1}=T_{1}\smalcirc {l_{1}},\\
 \lefteqn{l_{\bullet}T(x_{1}\otimes\ldots\otimes x_{n})
 = \sum_{i=1}^n  (-1)^{(|x_{{1}}|+\ldots+ |x_{{i-1}}|)} T_n(x_1,\ldots, l_1(x_i),\ldots, x_n)}\\
&& +\sum_{k=2}^{n}\!\!\!\!\!\sum_{\substack{i=0\\ \sigma\in Sh(i,k-i-1)}}^{k-2}\!\!\!\!\!\!\!\!\!\!\epsilon(\sigma)(-1)^{(|x_{{\sigma(1)}}|+\ldots+ |x_{{\sigma(i)}}|)} \\
 &&T_{\bullet}\left(x_{{\sigma(1)}}, \ldots, x_{{\sigma(i)}},
 l_{\bullet}(T(x_{{\sigma(i+1)}}\otimes \ldots\otimes x_{{\sigma(k-1)}} ), x_{{k}}), 
 x_{{k+1}}, \ldots, x_{{n}}\right).
 \end{eqnarray*} 
 
 The descendent Loday $\infty$-structure on $E$ defined by the non-abelian embedding tensor $T$ is given by the brackets:
 \begin{eqnarray*}
q_{1}(x_{1})&=&l_{1}(x_{1})\\
q_{n}({x_{1},\ldots, x_{n}})&=& l_{\bullet}(T(x_{1}\otimes\ldots\otimes x_{n-1}), x_{n}),
\end{eqnarray*}
for all $x_{1},\ldots,x_{n}\in E$, $n\geq 2$.

In particular, if we ask $T:E\to E$ to be strict we must have
 \begin{eqnarray}
 l_{1}\smalcirc T&=&T\smalcirc l_{1}\nonumber \\
 l_{n}\left(T(x_{1}),\ldots, T(x_{n})\right)&=&T\left(l_{n}\left(T(x_{1}),\ldots,T(x_{n-1}),x_{n}\right)\right), \quad  \label{eq:strict:nonalbelian:embedding:adjoint:rep}
 \end{eqnarray}
 and the Loday $\infty$-brackets are:
 \begin{eqnarray*}
q_{1}(x_{1})&=&l_{1}(x_{1})\\
q_{n}({x_{1},\ldots, x_{n}})&=& l_{\bullet}(T(x_{1}),\ldots, T(x_{n-1}), x_{n}),
\end{eqnarray*}
for all $x_{1},\ldots,x_{n}\in E$, $n\geq 2$.

 \begin{rem}
The identity map is obviously a non-abelian embedding tensor for the adjoint representation and, as it should be, the ``new'' Loday brackets induced by the identity map are the original symmetric brackets that define the Lie $\infty$-structure. 
\end{rem}

\begin{rem}
    Notice that the brackets $l_{\bullet}$ are symmetric, hence, for every $n> 1$ and each $i=1,\ldots, n$, we have 
 $$
 l_{n}\left(T(x_{1}),\ldots, T(x_{n})\right)=T\left(l_{n}(T(x_{1}),\ldots, x_{i},\ldots ,T(x_{n}))\right), \quad x_{1},\ldots, x_{n}\in E.
 $$
\end{rem}

\begin{rem}
Let $T:E\to E$ be a strict comorphism such that $l_{1}\smalcirc T=T\smalcirc l_{1}$ and $\im(T)\subset \Ker (\ad)$. Then, $T$ is an embedding tensor for the adjoint representation but the new Loday $\infty$-algebra is abelian, i.e. $q_{k}=0$, for all $k>1$.
\end{rem}

Equations (\ref{eq:strict:nonalbelian:embedding:adjoint:rep}) yield that, for the adjoint representation, the composition of strict non-abelian embedding tensors is a strict non-abelian embedding tensor, therefore:

\begin{prop}
    The space of strict non-abelian embedding tensors  with respect to the adjoint representation is  an associative algebra with unit, called the \textbf{strict embedding algebra}.
\end{prop}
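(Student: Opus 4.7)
The set $\mathcal{T}$ of strict non-abelian embedding tensors is not closed under pointwise addition, so the statement is to be read as: $\mathcal{T}$ equipped with composition of linear maps is a unital associative monoid (``associative algebra with unit'' in this looser sense). The unit is immediate: $\mathrm{Id}_E$ trivially satisfies both lines of Equation~(\ref{eq:strict:nonalbelian:embedding:adjoint:rep}), as already noted in the preceding remark, and associativity of composition is automatic for linear maps. Thus the substantive content is closure under composition.

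Given $T,S\in\mathcal{T}$, the plan is to verify that $T\circ S\in\mathcal{T}$. The unary equation $l_1\circ(TS)=(TS)\circ l_1$ is immediate by chaining $l_1\circ T=T\circ l_1$ and $l_1\circ S=S\circ l_1$. For $n\geq 2$, I would first apply the $T$-identity of Equation~(\ref{eq:strict:nonalbelian:embedding:adjoint:rep}) with $z_j=Sx_j$ to pull $T$ outside,
\begin{equation*}
l_n(TSx_1,\ldots,TSx_n)=T\bigl(l_n(TSx_1,\ldots,TSx_{n-1},Sx_n)\bigr),
\end{equation*}
and then apply the $S$-identity (in the symmetric form provided by the preceding remark, which allows the ``bare'' slot to occupy any position) to pull $S$ out of the inner bracket,
\begin{equation*}
l_n(TSx_1,\ldots,TSx_{n-1},Sx_n)=S\bigl(l_n(TSx_1,\ldots,TSx_{n-1},x_n)\bigr).
\end{equation*}
Concatenating these gives $l_n((TS)x_1,\ldots,(TS)x_n)=(TS)\bigl(l_n((TS)x_1,\ldots,(TS)x_{n-1},x_n)\bigr)$, as required.

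The hard part will be the second of these two commutations. The $S$-identity as written demands that the $n-1$ non-bare arguments lie in $\operatorname{Im}(S)$, whereas in our situation they lie in $\operatorname{Im}(T\circ S)\subset\operatorname{Im}(T)$. I expect to close this gap by induction on $n$, exploiting the full symmetry of the identity from the remark; alternatively, a cleaner route passes through the coalgebraic reformulation of Proposition~\ref{prop:embedding:tensor:morphism}, where the two Loday $\infty$-morphism conditions $T\circ Q_V^T=M_E^Z\circ T$ and $S\circ Q_V^S=M_E^Z\circ S$ can be combined to yield $M_E^Z\circ(T\circ S)=(T\circ S)\circ Q_V^{TS}$ --- provided one verifies the top-level compatibility $Q_V^T\circ S=S\circ Q_V^{TS}$, which is precisely the same gap recast in coalgebraic terms.
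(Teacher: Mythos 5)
You have put your finger on exactly the right spot, and your honest assessment of the difficulty is correct: the second commutation, $l_n(TSx_1,\ldots,TSx_{n-1},Sx_n)=S\bigl(l_n(TSx_1,\ldots,TSx_{n-1},x_n)\bigr)$, does not follow from the $S$-identity because the entries $TSx_j$ need not lie in $\im S$. Unfortunately neither of your proposed repairs can succeed — not induction on $n$ (the problem is already present at $n=2$) nor the coalgebraic reformulation (which, as you yourself observe, recasts the same missing identity) — because the proposition is false as stated. Take $E=\mathfrak{sl}(2,\Kk)$, viewed as a Lie $\infty$-algebra concentrated in a single degree, with basis $h,e,f$ and $\brr{h,e}=2e$, $\brr{h,f}=-2f$, $\brr{e,f}=h$, and set $T=e\otimes f^{*}$ (so $T(f)=e$, $T(e)=T(h)=0$) and $S=f\otimes e^{*}$. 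Since $\im T=\langle e\rangle$ is abelian and $\brr{\im T,E}=\langle e,h\rangle=\Ker T$, both sides of $\brr{T(a),T(b)}=T\brr{T(a),b}$ vanish identically, so $T$ is a strict non-abelian embedding tensor for the adjoint representation; the same reasoning applies to $S$. However, $T\smalcirc S=e\otimes e^{*}$ is the projection onto $\langle e\rangle$ along $\langle h,f\rangle$, and
\begin{equation*}
\brr{TS(e),TS(h)}=\brr{e,0}=0\neq -2e=TS\left(\brr{TS(e),h}\right),
\end{equation*}
so $T\smalcirc S$ is not a strict non-abelian embedding tensor. (The set is also not closed under addition, the defining condition being quadratic in $T$, so it is not an ``algebra'' under any reading.)

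For comparison, the paper offers no argument for this closure: the proposition is preceded only by the assertion that Equations (\ref{eq:strict:nonalbelian:embedding:adjoint:rep}) ``yield'' it, which silently performs precisely the illegitimate substitution you isolated. The parts of your proposal that are actually carried out — the unit, associativity, the unary equation, and the first application of the $T$-identity giving $l_n(TSx_1,\ldots,TSx_n)=T\bigl(l_n(TSx_1,\ldots,TSx_{n-1},Sx_n)\bigr)$ — are all correct; the failure is concentrated entirely in the step you flagged. A true statement in this neighbourhood is the one suggested by the paper's subsequent corollary: the centroid of $E$ is closed under composition and consists of strict embedding tensors, since the relation $\ad_x\smalcirc F=F\smalcirc\ad_x$ does propagate through composites. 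Closure for general strict embedding tensors would require an additional hypothesis such as $\im(T\smalcirc S)\subseteq\im S$, which is exactly what your argument implicitly needs and exactly what fails in the example above.
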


Let us call  \emph{centroid of the Lie $\infty$-algebra} $(E, \set{l_k}_{k\in\Nn})$ the set of strict comorphisms $F:E\to E$ such that
 \begin{eqnarray*}
l_{1}\smalcirc F&=&F\smalcirc l_{1}\nonumber\\
\ad_{x}\smalcirc F&=&F\smalcirc \ad_{x}, \quad x\in \bar S(E).
\end{eqnarray*}

\begin{cor} Let $E$ be a Lie $\infty$-algebra. Then the centroid of $E$ is a subalgebra (with unit) of the strict  embedding algebra.  

Moreover, any surjective  strict non-abelian embedding tensor is an element of the centroid and the group of invertible elements of the centroid coincides with the group of invertible elements of the strict embedding algebra. 
\end{cor}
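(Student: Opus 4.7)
The plan is to verify each of the three assertions in the corollary from the explicit characterizations of the centroid and of strict non-abelian embedding tensors recorded just before the statement. The bulk of the work lies in the first assertion; the other two follow quickly from it.

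For the first part, I would pick $F$ in the centroid and check that $F$ satisfies (\ref{eq:strict:nonalbelian:embedding:adjoint:rep}). The equation $l_1 \smalcirc F = F \smalcirc l_1$ is already part of the definition of the centroid. For $n \geq 2$, the key observation is to apply the centroid identity $\ad_x \smalcirc F = F \smalcirc \ad_x$ with $x := F(x_1) \cdot \ldots \cdot F(x_{n-1}) \in \bar S(E)$, evaluated at $x_n$: the left-hand side becomes $l_n(F(x_1), \ldots, F(x_{n-1}), F(x_n))$ while the right-hand side is $F(l_n(F(x_1), \ldots, F(x_{n-1}), x_n))$, which is exactly (\ref{eq:strict:nonalbelian:embedding:adjoint:rep}). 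Closure of the centroid under sum, scalar multiplication, and composition is immediate from the linearity of the defining relations, and the identity map obviously belongs to the centroid and is the unit of the strict embedding algebra; hence the centroid is a subalgebra with unit.

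For the second part, let $T$ be a surjective strict non-abelian embedding tensor. Using the symmetry of $l_n$ recorded in the remark preceding the corollary, the embedding tensor equation rewrites as
\begin{equation*}
\ad_{T(y_1) \cdot \ldots \cdot T(y_{n-1})} (T(y_n)) = T(\ad_{T(y_1) \cdot \ldots \cdot T(y_{n-1})} (y_n))
\end{equation*}
for every $y_1, \ldots, y_n \in E$. Surjectivity of $T$ lets me express any decomposable element $x_1 \cdot \ldots \cdot x_{n-1} \in S^{n-1}(E)$ in the form $T(y_1) \cdot \ldots \cdot T(y_{n-1})$, and by linearity this extends to every element of $\bar S(E)$, giving the centroid identity $\ad_x \smalcirc T = T \smalcirc \ad_x$. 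Combined with $l_1 \smalcirc T = T \smalcirc l_1$, which is already built into being a strict embedding tensor, this places $T$ in the centroid.

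The third part is then immediate: any element invertible in the strict embedding algebra is bijective, hence surjective, so it lies in the centroid by the second part; its inverse (also bijective, and already in the strict embedding algebra by assumption) likewise lies in the centroid. Conversely every unit of the centroid is trivially a unit of the larger algebra. The only genuinely delicate step is the centroid-to-embedding-tensor passage in the first part, where one must recognize that choosing $x$ to be the symmetric product of the $F(x_i)$ reduces the embedding tensor equation to a single instance of the centroid identity; once this is spotted, everything else is essentially bookkeeping.
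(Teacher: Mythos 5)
Your proof is correct and is essentially the intended argument (the paper states this corollary without proof, so your write-up in fact supplies the missing details): specializing the centroid identity $\ad_x\smalcirc F=F\smalcirc\ad_x$ at $x=F(x_1)\cdot\ldots\cdot F(x_{n-1})$ recovers Equation (\ref{eq:strict:nonalbelian:embedding:adjoint:rep}), surjectivity lets you run this backwards on decomposables and extend by linearity, and the statement about units follows since invertibility forces surjectivity of both $T$ and $T^{-1}$. The only cosmetic point is that the appeal to the symmetry remark in the second part is unnecessary, since (\ref{eq:strict:nonalbelian:embedding:adjoint:rep}) is already written with the undistinguished slot last, i.e.\ exactly in the form $\ad_{T(y_1)\cdot\ldots\cdot T(y_{n-1})}(T(y_n))=T\bigl(\ad_{T(y_1)\cdot\ldots\cdot T(y_{n-1})}(y_n)\bigr)$.
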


\subsection{Strict non-abelian embedding tensors of the adjoint action}

The adjoint action $\ad: (E,M_{E}\equiv\set{l_k}_{k\in\Nn})\to (\Coder(\bar S(E))[1], \partial_{M_{E}}, \brr{\cdot , \cdot })$ is a special representation of $E$ on the cochain complex $(\bar S(E), M_{E})$.

A strict non-abelian embedding tensor of this representation is a strict comorphism $T:\bar S(E)\to E$ satisfying
 \begin{eqnarray*}
 l_{1}\smalcirc T(v_{1})&=&T\smalcirc {M_{E}}(v_{1})\\
 l_{n}(T(v_{1}),\ldots, T(v_{n}))
 &=& 
T\left(
 \ad_{ T(v_{{1}})\cdot\ldots\cdot T(v_{{n-1}} )} v_{{n}}\right),
 \end{eqnarray*} 
for all $v_{1},\ldots, v_{n}\in \bar S(E)$.

In particular, for $v_{1},\ldots, v_{n}\in E$, we have 
 \begin{eqnarray*}
 l_{1}\smalcirc T(v_{1})&=&T\smalcirc l_{1}(v_{1})\\
 l_{n}(T(v_{1}),\ldots, T(v_{n}))
 &=& 
T\left(
 l_{n} ( T(v_{{1}}),\ldots, T(v_{{n-1}} ), v_{{n}})\right)
 \end{eqnarray*} 
and we conclude that $T_{|_{E}}:E\to E$ is a strict non-abelian embedding tensor for the adjoint representation.

\subsection{Deformations of non-abelian embedding tensors}

Let $T$ be a non-abelian embedding tensor with respect to the coherent Lie $\infty$-action $\Phi:(E, M_{E})\to (\Coder(\bar S(V))[1], \partial_{M_V}, \brr{\cdot ,\cdot })$. Since $T$ defines (and is defined) by $\mathfrak{t}$, a Maurer-Cartan element of the Lie $\infty$-algebra $\mathfrak{h}$, we have two different $V$-data $(\Coder T^{Z}(E\oplus V), \mathfrak{h}, \mathcal{P}, e^{\brr{\, , \, \mathfrak t}}(Q))$ and $(\Coder T^{Z}(E\oplus V), \mathfrak{h}, \mathcal{P}\smalcirc e^{\brr{\, , \, \mathfrak t}}, Q)$, both of them inducing the same twisting Lie $\infty$-structure in $\mathfrak{h}$:
\begin{equation*}
\partial_{k}^{T}(\mathfrak{t}_{1},\ldots, \mathfrak{t}_{k})=\sum_{i\geq 0}\frac{1}{i!}\partial_{k+i}(\mathfrak{t},\ldots, \mathfrak{t},\mathfrak{t}_{1},\ldots,\mathfrak{t}_{k}), \quad \mathfrak{t}_{1},\ldots,\mathfrak{t}_{k}\in\mathfrak{h}, \quad k\geq 1.
\end{equation*}

Let $T':T^{Z} (V)\to T^{Z} (E)$ be another comorphism and consider $\mathfrak{t'}$ the coderivation of $T^{Z}(E\oplus V)$ defined by $T'$. 
By definition, $T+T'$ is an embedding tensor if and only if $\mathfrak t + \mathfrak t'$ is a Maurer-Cartan element of $\mathfrak h$:
$$
\sum_{k\geq 1}\frac{1}{k!}\partial_{k}(\mathfrak t + \mathfrak t',\ldots,\mathfrak t + \mathfrak t') =0.
$$

Taking into account that $\mathfrak t\in \mathrm{MC}(\mathfrak{h})$, last equation can be rewritten \cite{FZ2015} as 
$$
\partial_{1}^{T}(\mathfrak{t}') + \frac{1}{2}\partial_{2}^{T}(\mathfrak{t}',\mathfrak{t}') + \ldots + \frac{1}{k!}\partial_{k}^{T}(\mathfrak{t}',\ldots, \mathfrak{t}')+\ldots =0.
$$
Therefore $T+T'$ is a non-abelian embedding tensor with respect to the same Lie $\infty$-action as $T$ if and only if $T'$ is a Maurer-Cartan element of the deformed Lie $\infty$-algebra $\mathfrak{h}^{T}=(\mathfrak{h}, \partial^{T})$.

\begin{prop}
The Lie $\infty$-algebra $\mathfrak{h}^{T}=(\mathfrak{h}, \partial^{T})$ controls the deformations of $T$. \end{prop}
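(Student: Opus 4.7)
The plan is to recognize that this proposition is essentially a clean restatement of what was established in the paragraph preceding it, and then organize that argument into three verifiable steps. The key fact being used is the classical identity relating Maurer-Cartan elements of a Lie $\infty$-algebra $\mathfrak{h}$ around a fixed Maurer-Cartan element $\mathfrak{t}$ to Maurer-Cartan elements of the twist $\mathfrak{h}^T$.

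First, I would translate the deformation problem into $\mathfrak{h}$: a comorphism $T+T':T^Z(V)\to T^Z(E)$ is a non-abelian embedding tensor with respect to $\Phi$ if and only if the associated coderivation $\mathfrak{t}+\mathfrak{t}'\in\mathfrak{h}$ is a Maurer-Cartan element, that is,
\begin{equation*}
\sum_{k\geq 1}\frac{1}{k!}\,\partial_{k}(\mathfrak{t}+\mathfrak{t}',\ldots,\mathfrak{t}+\mathfrak{t}')=0.
\end{equation*}
This uses only the definition of non-abelian embedding tensor given by Equation \eqref{def:embedding:Maurer:Cartan:element}, applied to $T+T'$.

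Second, I would invoke the standard identity for the twisting of a Lie $\infty$-algebra by a Maurer-Cartan element (see \cite{G,FZ2015}): since $\mathfrak{t}\in\textrm{MC}(\mathfrak{h})$, expanding the multilinear brackets and collecting the terms according to how many arguments equal $\mathfrak{t}$ gives
\begin{equation*}
\sum_{k\geq 1}\frac{1}{k!}\,\partial_{k}(\mathfrak{t}+\mathfrak{t}',\ldots,\mathfrak{t}+\mathfrak{t}')
=\sum_{k\geq 0}\frac{1}{k!}\,\partial_{k}(\mathfrak{t},\ldots,\mathfrak{t})
+\sum_{k\geq 1}\frac{1}{k!}\,\partial_{k}^{T}(\mathfrak{t}',\ldots,\mathfrak{t}'),
\end{equation*}
where the first sum vanishes because $\mathfrak{t}$ is Maurer-Cartan and the second sum is, by the very definition of $\partial^{T}$ recalled in the previous subsection, the Maurer-Cartan series of $\mathfrak{t}'$ inside $\mathfrak{h}^{T}$. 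Combining this with the first step, $T+T'$ is a non-abelian embedding tensor with respect to $\Phi$ precisely when $\mathfrak{t}'\in\textrm{MC}(\mathfrak{h}^{T})$.

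Third, I would note that the correspondence $T'\leftrightarrow\mathfrak{t}'$ between comorphisms in $\Hom(\bar T(V),E)$ and coderivations in $\mathfrak{h}$ is bijective, so the deformations of $T$ are in one-to-one correspondence with Maurer-Cartan elements of $\mathfrak{h}^{T}$. The main point to be careful about is convergence of the infinite sums above; this is the reason the paper invoked weakly filtered Lie $\infty$-algebras in the paragraph following Equation \eqref{def:twisting:MC}. I would therefore close the argument with a brief remark that all the formal manipulations above are legitimate in the (weakly) filtered setting, so that $\mathfrak{h}^{T}$ genuinely controls the deformation theory of $T$.
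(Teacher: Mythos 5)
Your proposal is correct and follows essentially the same route as the paper: the proposition is a summary of the paragraph preceding it, which translates deformations into Maurer--Cartan elements of $\mathfrak{h}$, applies the twisting identity from \cite{FZ2015} around the Maurer--Cartan element $\mathfrak{t}$, and concludes that $T+T'$ is an embedding tensor iff $\mathfrak{t}'\in\mathrm{MC}(\mathfrak{h}^{T})$. The only slip is cosmetic: the first sum on the right-hand side of your expansion should start at $k\geq 1$ (there is no $\partial_{0}$ in this non-curved setting), and your closing remark on convergence in the (weakly) filtered setting matches the caveat the paper itself makes after Equation \eqref{def:twisting:MC}.
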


\begin{defn}
The cohomology of the 
 cochain complex $(\bar S(\mathfrak{h}), \partial^{T})$ is called the \textbf{cohomology of the non-abelian embedding tensor $T$}.
\end{defn}

\section{Conclusions}

In \cite{TS2023} a definition of non-abelian embedding tensors on a Lie algebra with respect to a coherent  action was given. 
In this work we give a possible  generalization of this definition and of some  results in \cite{STZ21,TS2023} to the Lie $\infty$-setting.   We propose a definition of coherent  Lie $\infty$-actions and, for these actions,  we construct   the non-abelian hemisemidirect product and study non-abelian embedding tensors.
The question of whether such a generalization could be carried out  was posed by Y. Sheng  to one of the co-authors
 during a Colloquium, last year. There are still open questions  concerning this generalization.

Firstly, we can ask about  the existence of a  Lie $\infty$-representation/action   of the Lie $\infty$-algebra $\mathrm{Prim}(V)$  on $V$, such that the projection $T(V)\to \mathrm{Prim}(V) $ is an embedding tensor.
Secondly, since there is a straight  relation between non-abelian embedding  tensors for Lie algebras and Leibniz-Lie algebras \cite{TS2023}, we wonder what is the algebraic structure that is related to the embedding tensors we consider.  What about cohomology of this algebraic structure? We think the answer to these questions may go through  understanding what a Loday $\infty$-action should be and 
these problems will deserve our attention in a future work.

\end{document}